\documentclass{amsart}

\usepackage{bm}
\usepackage{graphicx}
\usepackage{enumerate}
\usepackage{amsthm}
\usepackage{amsmath}
\usepackage{amssymb}

\newcommand{\ti}{\tilde}

\newcommand{\R}{\mathbb{R}}

\newcommand{\diam}{\text{diam}}

\renewcommand{\phi}{\varphi}

\newcommand{\mL}{\mathcal{L}}
\newcommand{\mM}{\mathcal{M}}
\newcommand{\mD}{\mathcal{D}}
\newcommand{\mP}{\mathcal{P}}
\newcommand{\mF}{\mathcal{F}}
\newcommand{\mG}{\mathcal{G}}

\newtheorem{theorem}{Theorem}
\newtheorem{proposition}[theorem]{Proposition}

\newtheorem*{lemma*}{Lemma}
\newtheorem*{thmA}{Theorem A}

\theoremstyle{definition}
\newtheorem{definition}[theorem]{Definition}

\theoremstyle{remark}
\newtheorem{example}[theorem]{Example}
\newtheorem{remark}{Remark}

\setlength{\parindent}{0in}

\author{James Vargo}

\title{Reconstructing curves from lengths of projections onto lines}

\begin{document}

\begin{abstract}
In this paper, we address the problem of reconstructing a curve from the lengths of its projections onto lines.  
We first note that the curve itself is not uniquely determined from 
these measurements.  However, we find that a curve determines a measure on 
projective space which, as a function on Borel subsets of
projective space, returns the 
length of curve parallel to elements of the set.  We show that the projected length data can be expressed as the 
cosine transform of this measure on projective space.  The cosine transform is a well studied
integral transform on the sphere which is known to be injective.  We conclude that the measured length data uniquely determines the 
associated measure on projective space.  We then characterize the class of curves that
produce a common measure by starting with the case of piecewise linear curves and then passing to limits to obtain results for more general curves.
\end{abstract}

\maketitle

\section{Introduction}
\subsection{The problem}
Let $\alpha:[0,1]\rightarrow \mathbb{R}^d$ be an absolutely continuous curve, and let $\xi\in S^{d-1}$ be a unit vector.
For any line parallel to $\xi$, the length of the projection of
$\alpha$ onto the line is given by the integral:
\begin{align}\label{M}
M_\alpha(\xi)=\int_0^1 |\xi\cdot \alpha'(t)|\, dt.
 \end{align}

In this paper, we shall investigate what properties of $\alpha$
can be reconstructed from the data $M_\alpha:S^{d-1}\rightarrow \R$.\\

First, we note that the measurements can at best reconstruct properties of the velocity curve $\alpha'(t)$.  Indeed, $M_\alpha$ remains unchanged if the curve is translated in $\mathbb{R}^n$.  
Therefore, letting $\beta=\alpha'$, 
we could rephrase the problem to reconstruct $L^1$ curves $\beta$ from the integrals $M_\beta(u)=\int_a^b |u\cdot \beta(t)|\, dt.$ In light of the tomographic motivation presented in the next 
section, we prefer to consider the problem in terms of curves.
A variation of this problem not considered in this paper (but perhaps more fruitful for tomography) would be to add a weight function $f:\mathbb{R}^n\rightarrow \mathbb{R}$ to the measured data in the form
\begin{align}\label{weight}
  M_\alpha(\xi)=\int_0^1f(\alpha(t))|\xi\cdot \alpha'(t)|\, dt.
 \end{align}

\subsection{Motivation from tomography}
Consider a medium with unknown optical properties. We set up an apparatus that does two things. First, it imposes a uniform magnetic field over the medium. Second, it 
sends photons out from a point A. These photons pass through the medium, possibly scattering (bouncing around) many times and possibly tracing curved paths due to a (possibly) non-constant index of refraction.
Some of the photons reach point B and their respective net change in polarity is measured. The change in polarity of a given photon depends on two factors: the magnetic field and the path taken from point A to point B.  
The magnetic field is known because it is imposed by the experimenter. Therefore, the measured change in polarity carries information about the path taken. Therefore, it must carry information about the optical
properties of the medium itself. By some averaging process, the optical properties of the medium could perhaps be deduced from measurements of many photons.\\

If the magnetic field is given by $\xi$, then the measured data, the net change in polarity of the photon, is given by formula \eqref{M}. 
If the magnetic field has fixed direction but variable magnitude, then the measured data takes the form \eqref{weight}. So the problem addressed in this paper seeks to address a part of this tomography problem.
However, we acknowledge that measuring $M_\alpha(\xi)$ for all $\xi$ is not a direct model of the experiment since it represents an infinite number of measurements taken on a single photon.

The path of a photon in a given medium can be quite chaotic in the sense that it could scatter many times like a diffusive particle. However, its path is necessarily Lipschitz, hence absolutely continuous. In fact, in the 
context of this problem, there is no essential difference between Lipschitz and absolutely continuous (see Theorem \ref{constspeedT}).

\subsection{The data determines a measure on projective space}
We let $P^{d-1}$ denote real projective space of dimension $d-1$, and let $\pi:S^{d-1}\rightarrow P^{d-1}$ denote the projection that identifies antipodal points.  
Given a function $f$ on projective space, we let
$\hat{f}:\R^d\rightarrow \R$ denote the degree $1$ homogeneous extension of $f\circ \pi$:
\begin{align*}
 \hat{f}(r\xi)=|r|(f\circ\pi)(\xi),
\end{align*}
for $r\in \R$ and $\xi\in S^{d-1}$.  According the the Riesz Representation theorem, the curve $\alpha$ uniquely determines a measure 
$\mu_\alpha$ on $P^{d-1}$ through the positive linear functional on $C(P^{d-1})$:
\begin{align}\label{functional}
I_\alpha(f) = \int_0^1 \hat{f}(\alpha'(t))\, dt
 \end{align}

For each $\xi\in S^{d-1}$, we define $f_\xi:P^{d-1}\rightarrow \R$ by
\begin{align*}
  f_\xi(x) = |\xi\cdot \pi^{-1}(x)|.
 \end{align*}

Then we find that
\begin{align}\label{Mcosine}
 M_\alpha(\xi) & = \int_0^1|\xi\cdot \alpha'(t)|\, dt \notag\\
& = I_\alpha(f_\xi)  \\
& =\int f_\xi\, d\mu_\alpha. \notag
\end{align}
 Given any finite Borel measure $\mu$ on the sphere, the cosine transform of $\mu$ is defined to be
\begin{align*}
\mathcal{C}[\mu](\xi) =  \int_{S^{d-1}} |\xi \cdot x|\, \mu(x).
\end{align*}
The cosine transform has been studied by many authors.  In particular it is known that the linear span of the functions $f_\xi$ 
is uniformly dense in $C(P^{d-1})$, and that the transform is invertible \cite{Bolk}.  For measures with smooth
densities, explicit inversion formulas have been calculated using spherical harmonics (\cite{Sch}).
More recently, inversion formulas were found for $L^p$ densities (\cite{Rubin02}). In \cite{Kid, Hoff, Gard, Louis} various 
numerical reconstructions of the measure are derived.\\

By formula \eqref{Mcosine}, we conclude that $M_\alpha$ is the cosine transform
of the measure $\mu_\alpha\circ \pi^{-1}$ on $S^{d-1}$.  By injectivity, the measures $\mu_\alpha\circ \pi^{-1}$ on $S^{d-1}$ and 
$\mu_\alpha$ on $P^{d-1}$ are uniquely determined from $M_\alpha$.  If the measure $\mu$ has the form
\[\mu = f\, d\sigma,\]
where $\sigma$ is the standard surface measure of the sphere, then (\cite{Louis}) there exists $k>0$ such that 
\[
 k^{-1} \|f \|_{H^s} \leq \|\mathcal{C} f\|_{H^{s+\frac{d+2}{2}}}\leq k\|f\|_{H^s}.
\]
The Sobolev spaces are defined using the usual norms with respect to the spherical harmonics.

\subsection{The correspondence between curves and their associated measures on projective space}
We let $AC$ denote the space of absolutely continuous curves $\alpha:[0,1]\rightarrow \R^d$ modulo translation.  We give it topology through the $L^1$ norm of the velocity function:
\begin{align}\label{ACnorm}
  \|\alpha\|_{AC}=\|\alpha'\|_{L^1[0,1]}.
\end{align}
Piecewise linear curves (also referred to as broken lines) are a subclass which are dense in $AC$ and which can be defined as concatenations of linear 
segments.\footnote{Density follows from the density of step functions in $L^1[0,1]$.}
The decomposition of a broken line into linear segments
is not unique since a linear segment can itself be decomposed into smaller subsegments.
\begin{definition}
 Two broken lines are \emph{rearrangements} of one another if they can each be decomposed in such a way that their respective segments are translates of one another.
\end{definition}
For example, in a parallelogram $ABCD$, the broken
lines $ABC$, $ADC$ and $CBA$ are all rearrangements of each other.  Moreover, we could further decompose the segments $AB$ and $BC$ into subsegments and rearrange them to form yet more rearrangements.  
It is clear that two broken lines are rearrangements of one another if and only if they produce the same measure on $P^{d-1}$ which is necessarily a discrete measure.\\

\begin{proposition}
 The broken line $\alpha = A_0A_1\dots A_n$ produces the discrete measure with weights $c_k=|A_{k+1}-A_k|$ supported at the points $p_k=\pi\left(\frac{A_{k+1}-A_k}{|A_{k+1}-A_k|}\right)$:
\begin{align*}
 \mu_\alpha = \sum_{k=0}^{n-1} c_k\delta_{p_k}.
\end{align*}
\end{proposition}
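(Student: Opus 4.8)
The plan is to evaluate the positive linear functional $I_\alpha$ of \eqref{functional} directly on a convenient piecewise-linear parametrization of $\alpha$, and then read off the representing measure from its action on $C(P^{d-1})$, using uniqueness in the Riesz representation theorem.

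First I would fix a parametrization of $\alpha = A_0A_1\dots A_n$: choose $0=t_0<t_1<\dots<t_n=1$ and let $\alpha$ be affine on each $[t_k,t_{k+1}]$ with $\alpha(t_k)=A_k$. Then $\alpha$ is absolutely continuous and, for almost every $t\in(t_k,t_{k+1})$,
\[
 \alpha'(t)=\frac{A_{k+1}-A_k}{t_{k+1}-t_k}.
\]
One should first remark that $I_\alpha$, and hence $\mu_\alpha$, depends only on the curve and not on the chosen parametrization: this follows from the change-of-variables formula for absolutely continuous reparametrizations, which leaves $\int_0^1 \hat f(\alpha'(t))\,dt$ unchanged precisely because $\hat f$ is homogeneous of degree $1$. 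This is what makes it legitimate to compute with the affine parametrization above.

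Next, for any $f\in C(P^{d-1})$ I would split the integral over the subintervals and use the degree-$1$ homogeneity of $\hat f$. Writing $v_k=(A_{k+1}-A_k)/|A_{k+1}-A_k|$ whenever $A_{k+1}\neq A_k$, we have on $(t_k,t_{k+1})$
\[
 \hat f(\alpha'(t))=\hat f\!\left(\frac{|A_{k+1}-A_k|}{t_{k+1}-t_k}\,v_k\right)=\frac{|A_{k+1}-A_k|}{t_{k+1}-t_k}\,(f\circ\pi)(v_k)=\frac{c_k}{t_{k+1}-t_k}\,f(p_k),
\]
which is constant on the subinterval, so integrating over $[t_k,t_{k+1}]$ contributes exactly $c_k f(p_k)$. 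A degenerate segment with $A_{k+1}=A_k$ has $\alpha'=0$ a.e.\ there and $\hat f(0)=0$, so it contributes nothing, consistent with $c_k=0$. Summing over $k$ gives
\[
 I_\alpha(f)=\sum_{k=0}^{n-1}c_k f(p_k)=\int_{P^{d-1}} f\,d\!\left(\sum_{k=0}^{n-1}c_k\delta_{p_k}\right).
\]
Since $\sum_{k} c_k\delta_{p_k}$ is a finite nonnegative Borel measure on $P^{d-1}$ and the Riesz representation of the positive functional $I_\alpha$ is unique, this identifies $\mu_\alpha=\sum_{k=0}^{n-1}c_k\delta_{p_k}$.

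I do not expect a genuine obstacle here; the proposition is essentially a direct computation. The only points needing a little care are: (i) justifying parametrization-independence of the representing measure, so that the affine parametrization may be used; (ii) tracking the homogeneity exponent so that the factors $1/(t_{k+1}-t_k)$ cancel against the interval lengths; and (iii) handling degenerate segments and the case where several distinct $v_k$ (or their antipodes) descend to the same point of $P^{d-1}$, in which event the corresponding weights simply add — which is exactly what the expression $\sum_k c_k\delta_{p_k}$ already encodes.
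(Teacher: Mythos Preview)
Your argument is correct and complete; the paper in fact states this proposition without proof, treating it as a direct computation, and what you have written is exactly that computation carried out carefully via the functional $I_\alpha$ and Riesz uniqueness. Your attention to parametrization-independence, degenerate segments, and coinciding directions is appropriate but not strictly needed for the bare statement.
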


The following proposition is a technical necessity and is proved in the appendix.

\begin{proposition}
 For each curve $\alpha\in AC$, there is a reparametrization $\ti{\alpha}$ which is constant speed and 
 which produces the same measure $\mu_\alpha$.
\end{proposition}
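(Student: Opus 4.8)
The plan is to use the arc-length reparametrization of $\alpha$, handled carefully because the arc-length function need not be strictly increasing. Write $L=\|\alpha'\|_{L^1[0,1]}$. If $L=0$ then $\alpha$ is constant, $\mu_\alpha=0$, and any constant curve works, so assume $L>0$. Let $s(t)=\int_0^t|\alpha'(\tau)|\,d\tau$, which is absolutely continuous and nondecreasing with $s(0)=0$, $s(1)=L$. Whenever $s(t_1)=s(t_2)$ with $t_1<t_2$ we have $\int_{t_1}^{t_2}|\alpha'|=0$, so $\alpha'=0$ a.e.\ on $[t_1,t_2]$ and $\alpha$ is constant there; hence $\ti\alpha:[0,1]\to\R^d$ given by $\ti\alpha(u)=\alpha(t)$ for any $t$ with $s(t)=Lu$ is well defined, and $\alpha=\ti\alpha\circ(s/L)$ holds identically.

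Next I would record that $\ti\alpha$ is $L$-Lipschitz: choosing $t_i$ with $s(t_i)=Lu_i$, $|\ti\alpha(u_2)-\ti\alpha(u_1)|=|\alpha(t_2)-\alpha(t_1)|\le\int_{t_1}^{t_2}|\alpha'|=L(u_2-u_1)$. In particular $\ti\alpha\in AC$ and $|\ti\alpha'|\le L$ a.e. To upgrade this to constant speed $L$, I would apply the chain rule for the composition of the Lipschitz map $\ti\alpha$ with the absolutely continuous scalar function $\phi:=s/L$: for a.e.\ $t$,
\begin{align*}
\alpha'(t)=\ti\alpha'(\phi(t))\,\phi'(t)=\ti\alpha'\!\left(\tfrac{s(t)}{L}\right)\frac{|\alpha'(t)|}{L},
\end{align*}
where at points with $\phi'(t)=0$ both sides vanish by the Lipschitz bound, and at points with $\phi'(t)>0$ one uses the standard fact that an absolutely continuous function has derivative zero a.e.\ on the preimage of the null set on which $\ti\alpha'$ fails to exist. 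Taking norms gives $|\ti\alpha'(s(t)/L)|=L$ whenever $|\alpha'(t)|>0$. Since an absolutely continuous function maps the set of points of vanishing derivative to a Lebesgue-null set, $s$ carries $\{|\alpha'|=0\}$ to a null set, so almost every $u\in[0,1]$ equals $s(t)/L$ for some $t$ with $|\alpha'(t)|>0$; hence $|\ti\alpha'|=L$ a.e.

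It then remains to check $\mu_{\ti\alpha}=\mu_\alpha$, and by uniqueness in the Riesz representation theorem it suffices to show $I_{\ti\alpha}(f)=I_\alpha(f)$ for every $f\in C(P^{d-1})$, where $I$ is the functional of \eqref{functional}. Since $\hat f$ is absolutely homogeneous of degree one and $|\alpha'(t)|/L\ge 0$, the displayed identity gives $\hat f(\alpha'(t))=\frac{|\alpha'(t)|}{L}\,\hat f\!\left(\ti\alpha'(s(t)/L)\right)$ for a.e.\ $t$. Applying the change-of-variables formula
\begin{align*}
\int_0^1 g\!\left(\tfrac{s(t)}{L}\right)\frac{|\alpha'(t)|}{L}\,dt=\int_0^1 g(u)\,du
\end{align*}
for the absolutely continuous nondecreasing substitution $u=s(t)/L$, with $g(u)=\hat f(\ti\alpha'(u))\in L^1[0,1]$ (note $|\hat f(\ti\alpha'(u))|\le\|f\|_\infty\,|\ti\alpha'(u)|\le L\|f\|_\infty$), yields
\begin{align*}
I_\alpha(f)=\int_0^1\hat f(\alpha'(t))\,dt=\int_0^1\hat f(\ti\alpha'(u))\,du=I_{\ti\alpha}(f).
\end{align*}

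I expect the main obstacle to be the two measure-theoretic ingredients invoked above: the a.e.\ chain rule $\alpha'=(\ti\alpha'\circ\phi)\,\phi'$ for a Lipschitz map precomposed with a merely absolutely continuous (not strictly monotone) $\phi$, which hinges on the lemma that an absolutely continuous function has vanishing derivative a.e.\ on the preimage of any null set, together with the change-of-variables identity for such a substitution. Both are classical but warrant careful statement, which is presumably why the proposition is deferred to the appendix.
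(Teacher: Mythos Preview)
Your argument is correct and takes a genuinely different route from the paper's.

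The paper builds the reparametrizing \emph{map} $\ti\phi:[0,1]\to[0,1]$ in two stages: first a right-inverse $\phi_E$ of the cumulative distribution of the set $E=\{\alpha'\neq 0\}$, then a further composition with $(\ell\circ\phi_E)^{-1}$ to normalize speed. The hard work is proving that $\alpha\circ\phi_E$ is absolutely continuous: since $\phi_E$ has jump discontinuities, this requires an explicit $\ep$--$\delta$ argument controlling the jumps, and it relies on the bound $\|\phi_E'\|_\infty\le 1$. The paper's closing remark explains that this is exactly why the two-step detour is needed: the direct right-inverse of the arc-length $\ell$ would not have bounded derivative, so their proof of absolute continuity would break down.

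You sidestep this entirely. Rather than working with a (possibly discontinuous) right-inverse, you define $\ti\alpha$ through the \emph{forward} factorization $\alpha=\ti\alpha\circ(s/L)$ and observe that $\ti\alpha$ is $L$-Lipschitz by a one-line estimate; absolute continuity is then automatic. All subsequent analysis (chain rule, change of variables) is carried out along the well-behaved absolutely continuous map $s/L$ rather than its inverse. The measure-theoretic ingredients you flag --- the a.e.\ chain rule for a Lipschitz outer function composed with a monotone AC inner function, and the de la Vall\'ee Poussin--type fact that an AC function has vanishing derivative a.e.\ on the preimage of any null set --- are exactly the ones needed, and indeed the paper invokes the same reference (Leoni) for the former.

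What each approach buys: yours is shorter and more transparent for the Proposition as stated. The paper's construction, however, yields an explicit reparametrization map $\ti\phi$ with the additional property~(i) of Theorem~A (any $F\in AC$ subordinate to $\alpha$ in the sense of \eqref{subord} also becomes AC after composing with $\ti\phi$), which your argument does not directly produce since you never isolate $\ti\phi$ itself.
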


The following theorem is proved by constructing a right inverse.
\begin{theorem}
 The correspondence $\alpha\mapsto \mu_\alpha$ is surjective on the set of positive, finite Borel measures on $P^{d-1}$.
\end{theorem}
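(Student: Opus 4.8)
The plan is to build, for each positive finite Borel measure $\mu$ on $P^{d-1}$, an explicit constant-speed curve $\alpha_\mu\in AC$ with $\mu_{\alpha_\mu}=\mu$; the assignment $\mu\mapsto\alpha_\mu$ is then the desired right inverse. Set $L=\mu(P^{d-1})$. If $L=0$ then $\mu$ is the zero measure and any constant curve works, since $\hat f(0)=0$ forces $I_\alpha(f)=0$ for all $f$. So assume $L>0$ and put $\nu=\mu/L$, a Borel probability measure on $P^{d-1}$.

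The heart of the argument is a standard measure-theoretic fact: there is a Borel map $g\colon[0,1]\to P^{d-1}$ pushing Lebesgue measure $\lambda$ forward to $\nu$, i.e.\ $g_*\lambda=\nu$. This is the abstract ``distribution/quantile transform'': since $P^{d-1}$ is a compact metric space, hence a standard Borel space, one either cites the isomorphism theorem for standard probability spaces (splitting off the atomic part of $\nu$, which is transported by sending subintervals to the atoms, and mapping the atomless part by a measure isomorphism with $[0,1]$), or --- more in the spirit of the rest of the paper --- builds $g$ as a pointwise limit of step maps adapted to a refining sequence of finite Borel partitions of $P^{d-1}$ of shrinking mesh, the $n$-th step map being essentially the direction function of a broken line whose measure is close to $\mu$. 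Next, using that $\pi\colon S^{d-1}\to P^{d-1}$ is a $2$-sheeted covering over a second-countable base, choose a Borel section $s\colon P^{d-1}\to S^{d-1}$ with $\pi\circ s=\mathrm{id}$ (cover $P^{d-1}$ by countably many evenly covered open sets and pick one sheet over each, then patch). Put $\gamma=s\circ g\colon[0,1]\to S^{d-1}$; this is Borel, satisfies $|\gamma|\equiv 1$, and $(\pi\circ\gamma)_*\lambda=g_*\lambda=\nu$.

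Now define $\alpha_\mu(t)=L\int_0^t\gamma(u)\,du$. Then $\alpha_\mu$ is Lipschitz with constant $L$, hence absolutely continuous, and $\alpha_\mu'=L\gamma$ a.e.\ by the Lebesgue differentiation theorem. For any $f\in C(P^{d-1})$, homogeneity of the extension gives $\hat f(L\gamma(t))=L\,(f\circ\pi)(\gamma(t))=L\,(f\circ g)(t)$, using $\pi\circ s=\mathrm{id}$; integrating and changing variables,
\[
I_{\alpha_\mu}(f)=\int_0^1\hat f(\alpha_\mu'(t))\,dt=L\int_0^1 (f\circ g)(t)\,dt=L\int_{P^{d-1}} f\,d(g_*\lambda)=L\int f\,d\nu=\int f\,d\mu .
\]
Since $I_{\alpha_\mu}$ and integration against $\mu$ agree as positive linear functionals on $C(P^{d-1})$, uniqueness in the Riesz representation theorem yields $\mu_{\alpha_\mu}=\mu$, so $\alpha\mapsto\mu_\alpha$ is surjective.

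I expect the only genuine obstacle to be the existence of the measure-pushing Borel map $g$; everything after it (the Borel section, the Lipschitz estimate, the homogeneity computation, Riesz uniqueness) is routine. If one prefers to avoid quoting the abstract isomorphism theorem, the substitute is to approximate $\mu$ weak-$*$ by discrete measures $\mu_n$, realize each by a constant-speed broken line $\alpha_n$ via the preceding Proposition, and --- by choosing the $\mu_n$ and the ordering of their segments compatibly with a fixed refining sequence of partitions --- arrange that the direction functions $\gamma_n$ form a Cauchy sequence in $L^1([0,1];\R^d)$; the limit is the $\gamma$ above and $\alpha_n\to\alpha_\mu$ in $AC$. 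Care is needed there precisely because weak-$*$ convergence of $\mu_n$ alone does not control $\|\alpha_n-\alpha_m\|_{AC}=\|\gamma_n-\gamma_m\|_{L^1}$, so the compatibility of the partitions (equivalently, the coherence of the constructions at different scales) is the point on which the argument turns.
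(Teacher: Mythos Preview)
Your proof is correct but follows a genuinely different route from the paper's. You produce the velocity $\alpha_\mu'=L\gamma$ in one stroke by invoking the existence of a Borel map $g\colon[0,1]\to P^{d-1}$ pushing Lebesgue measure to $\nu$, then lifting through a Borel section of $\pi$ (the paper already supplies such a section via the set $X$ in equation~\eqref{X}, so that step is cheap here). The paper instead realizes $\mu$ concretely as a limit of broken lines: it fixes a nested sequence of ordered tagged partitions $\mP^t_k$ with $\|\mP^t_k\|\to 0$, defines $\alpha_k=\mG_{\mP^t_k}(\mu)=\bigoplus_i\mu(U_i)\hat x_i$, and exploits the ordered-refinement structure to show $\{\alpha_k\}$ is Cauchy in $AC$; the limit $\alpha$ satisfies $\mu_\alpha=\mu$ by the continuity estimate~\eqref{muCont} together with Proposition~\ref{dapprox}. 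Your final paragraph describes exactly this alternative and correctly isolates the key point (compatibility of partitions is what makes the direction functions Cauchy in $L^1$). What your approach buys is brevity and a clean one-line right inverse, at the cost of citing an abstract isomorphism theorem; what the paper's approach buys is an explicit, self-contained construction that dovetails with the $\mF_\mP$/$\mG_\mP$ machinery developed in the preceding subsections and avoids any appeal to standard Borel space theory.
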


\section{Continuity Results}
We have three spaces, each equipped with a topology:

1. The space of absolutely continuous curves modulo translation with the norm $\|\cdot \|_{AC}$ defined by \eqref{ACnorm}:
\begin{align*} 
\|\alpha\|_{AC}=\|\alpha'\|_{L^1[0,1]}.
 \end{align*}

2. The space $\mM(P^{d-1})$ of finite signed Borel measures on $P^{d-1}$.  We also define $\mM^+(P^{d-1})\subset \mM(P^{d-1})$ to be the subset of positive, finite Borel measures.  
We take the standard weak topology in which $\mu_k\rightarrow \mu$ if
 $\int f\, \mu_k\rightarrow \int f\, \mu$ for all continuous functions $f$.  On compact spaces, this topology is metrizable through Lipschitz functions.  Let $d_E$ denote the Euclidean
 distance in $\R^d$, and let $d_S$ denote its restriction to points in the sphere $S^{d-1}$.  We a metric $d_P$ on projective space $P^{d-1}$ by:
 \begin{align*} 
 d_P(x,y)= d_S(\pi^{-1}(x),\pi^{-1}(y)).
   \end{align*}
 Through this metric, we define $\mL(P^{d-1})$, the space of Lipschitz functions on $P^{d-1}$, and endow it with the norm $\|\cdot\|_{\mL}$ defined by
 \begin{align*} 
 \| f\|_\mL = \|f\|_{\infty}+\sup_{x\neq y} \frac{|f(y)-f(x)|}{d_P(x,y)}.
   \end{align*}

The weak topology on $\mM(P^{d-1})$ is metrizable through the norm $\|\cdot\|_w$ defined by
 \begin{align*}
 \|\mu\|_w = \sup_{\|f\|_\mathcal{L}\leq 1} \int f\, d\mu,
 \end{align*}
 where the supremum is taken over all Lipschitz functions with $\mL$ norm less than or equal to $1$.\\
 
3. The space $C(P^{d-1})$ of continuous functions on $P^{d-1}$ with the uniform norm $\|\cdot \|_{\infty}$.  We identify $C(P^{d-1})$ with the space of even
continuous functions on the sphere, $C_E(S^{d-1})$.\\

With respect to these norms we have the following continuity results.
\begin{proposition}
For all absolutely continuous curves $\alpha, \beta \in AC$, and all finite signed Borel measures $\mu,\nu \in \mM(P^{d-1})$,
\begin{align}
  \|\mu_{\alpha}-\mu_{\beta}\|_w\  \leq  \|\alpha-\beta\|_{AC}; \label{muCont}\\
  \|M_\mu-M_\nu\|_\infty \leq  2\|\mu-\nu\|_w.
 \end{align}
\end{proposition}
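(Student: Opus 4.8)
The plan is to prove the two inequalities separately, each by testing the defining suprema against the appropriate class of functions. For the first inequality, I would fix a Lipschitz function $f$ on $P^{d-1}$ with $\|f\|_\mL \le 1$ and estimate
\begin{align*}
\left| \int f\, d\mu_\alpha - \int f\, d\mu_\beta \right| = \left| I_\alpha(f) - I_\beta(f) \right| = \left| \int_0^1 \bigl( \hat f(\alpha'(t)) - \hat f(\beta'(t)) \bigr)\, dt \right|.
\end{align*}
The key point is a pointwise bound $|\hat f(u) - \hat f(v)| \le |u-v|$ for all $u,v \in \R^d$, coming from the fact that $\hat f$ is the degree-$1$ homogeneous extension of a function whose restriction to $S^{d-1}$ is $1$-Lipschitz and bounded by $1$. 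Concretely, $\hat f$ is positively homogeneous of degree $1$, so it suffices to check that its values on $S^{d-1}$ together with homogeneity force the global Lipschitz constant $1$; this is a routine estimate (split into the radial and spherical directions, or note that a $1$-homogeneous function with $\|f\|_\infty \le 1$ and spherical Lipschitz constant $\le 1$ has gradient of norm $\le 1$ a.e.). Granting this, the integrand is bounded by $|\alpha'(t) - \beta'(t)|$, so the whole expression is at most $\|\alpha' - \beta'\|_{L^1[0,1]} = \|\alpha - \beta\|_{AC}$. Taking the supremum over all such $f$ gives \eqref{muCont}.

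For the second inequality, I would fix $\xi \in S^{d-1}$ and write
\begin{align*}
M_\mu(\xi) - M_\nu(\xi) = \int f_\xi\, d\mu - \int f_\xi\, d\nu = \int f_\xi\, d(\mu - \nu),
\end{align*}
using formula \eqref{Mcosine}. Since $f_\xi(x) = |\xi \cdot \pi^{-1}(x)|$, I would check that $f_\xi \in \mL(P^{d-1})$ with $\|f_\xi\|_\mL \le 2$: the sup norm is at most $1$ (Cauchy–Schwarz), and the Lipschitz constant with respect to $d_P$ is at most $1$ because $x \mapsto |\xi \cdot x|$ is $1$-Lipschitz on $S^{d-1}$ and $d_P$ is defined as the minimum over the two antipodal representatives, so passing to the quotient does not increase the Lipschitz constant. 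Hence $\tfrac12 f_\xi$ is a valid test function for $\|\cdot\|_w$, giving $|M_\mu(\xi) - M_\nu(\xi)| \le 2\|\mu - \nu\|_w$, and taking the supremum over $\xi$ yields the claim.

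The main obstacle is the first step: verifying cleanly that the homogeneous extension $\hat f$ of a function $f$ with $\|f\|_\mL \le 1$ is globally $1$-Lipschitz on $\R^d$. One has to be a little careful because homogeneity can amplify the Lipschitz constant away from the unit sphere if one is not careful about how the spherical and radial contributions combine. I expect the cleanest route is: for $u = r\xi$, $v = s\eta$ with $r,s \ge 0$ and $\xi,\eta \in S^{d-1}$, write $\hat f(u) - \hat f(v) = r f(\xi) - s f(\eta) = r(f(\xi) - f(\eta)) + (r-s)f(\eta)$, bound the first term by $r\, d_S(\xi,\eta)$ and note $r\, d_S(\xi,\eta) \le r \cdot (\pi/2)\,|\xi - \eta|$ is too lossy — so instead I would argue directly with the gradient of $\hat f$ where it exists (Rademacher), using that the tangential part has norm $\le 1$ from the spherical Lipschitz bound and the radial part has norm $\le \|f\|_\infty \le 1$, but these are orthogonal so the full gradient has norm $\le \sqrt 2$. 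To recover the sharp constant $1$ rather than $\sqrt 2$ one uses that the bound \eqref{muCont} only needs *some* constant, or one observes that for the even extension the relevant comparison is against $|u - v|$ on the cone and the parallelogram identity tightens things; in any case a constant of $1$ (or at worst an absolute constant, which would only change \eqref{muCont} by a harmless factor) suffices for all later applications, so I would not belabor the sharpness.
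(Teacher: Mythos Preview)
Your plan is exactly the paper's: reduce the first inequality to the lemma that the homogeneous extension $\hat f$ is Lipschitz on $\R^d$ with constant $\|f\|_\mL$, and obtain the second inequality by checking $\|f_\xi\|_\mL\le 2$. Your treatment of the second inequality is correct and matches the paper's.

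The gap is in the lemma about $\hat f$, where you end with a constant $\sqrt 2$ and then hedge that the exact constant does not matter. It does matter, since the proposition asserts the constant $1$; but in fact your own gradient argument already delivers it once you stop discarding information. Write $B_1$ for the Lipschitz seminorm of $f$ and $B_2=\|f\|_\infty$. The hypothesis $\|f\|_\mL\le 1$ says $B_1+B_2\le 1$, not merely $B_1\le 1$ and $B_2\le 1$ separately. Since the radial and tangential parts of $\nabla\hat f$ are orthogonal with norms at most $B_2$ and $B_1$, you get
\[
|\nabla\hat f|\le \sqrt{B_1^2+B_2^2}\le B_1+B_2\le 1,
\]
and no hedging is needed.

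The paper avoids Rademacher and proves the lemma by a short direct argument: for $|x|=1\le r=|y|$ write $y=rz$ with $z\in S^{d-1}$; since $z$ is the nearest point on the sphere to $y$, the angle $xzy$ is obtuse, so both $|x-z|$ and $|z-y|$ are at most $|x-y|$, and the triangle inequality gives
\[
|\hat f(x)-\hat f(y)|\le B_1|x-z|+B_2|z-y|\le (B_1+B_2)|x-y|.
\]
The general case follows by homogeneity.
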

To prove the first inequality we need the following lemma.
\begin{lemma*} If $f:P^{d-1}\rightarrow \mathbb{R}$ is Lipschitz, then its homogeneous extension $\hat{f}:\mathbb{R}^d\rightarrow \mathbb{R}$
is Lipschitz with constant $\|f\|_\mL$.
\end{lemma*}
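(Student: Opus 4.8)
The plan is to verify the Lipschitz bound directly on an arbitrary pair $v,w\in\R^d$, after two harmless normalizations, using only that $\hat f$ is positively homogeneous of degree one and that $|\hat f|\le\|f\|_\infty$ on $S^{d-1}$. First I would dispose of the degenerate case: if $v=0$ then $\hat f(v)=0$ and $|\hat f(w)-\hat f(v)|=|w|\,|f(\pi(w/|w|))|\le\|f\|_\infty|w|\le\|f\|_\mL|v-w|$, and symmetrically if $w=0$. For $v,w\neq 0$, since the desired inequality is symmetric in $v$ and $w$, I may assume $|v|\le|w|$. Writing $r=|v|$, $s=|w|$, $\xi=v/r$, $\eta=w/s$ and $g=f\circ\pi$, the lemma is then reduced to the estimate
\begin{align*}
 |r\,g(\xi)-s\,g(\eta)|\ \le\ \|f\|_\mL\,|v-w|,\qquad 0<r\le s .
\end{align*}

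The key step is the splitting
\begin{align*}
 r\,g(\xi)-s\,g(\eta)\ =\ r\bigl(g(\xi)-g(\eta)\bigr)+(r-s)\,g(\eta),
\end{align*}
in which it is essential that the angular difference $g(\xi)-g(\eta)$ be weighted by the \emph{smaller} radius. Since the Lipschitz seminorm of $f$ on $(P^{d-1},d_P)$ equals $\|f\|_\mL-\|f\|_\infty$, we have $|g(\xi)-g(\eta)|\le(\|f\|_\mL-\|f\|_\infty)\,d_P(\pi\xi,\pi\eta)\le(\|f\|_\mL-\|f\|_\infty)|\xi-\eta|$, the last step because $\xi,\eta$ are representatives of $\pi\xi,\pi\eta$ so that $d_P(\pi\xi,\pi\eta)\le d_S(\xi,\eta)=|\xi-\eta|$; also $|g(\eta)|\le\|f\|_\infty$. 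Thus it suffices to prove the two elementary inequalities $|r-s|\le|v-w|$ and $r\,|\xi-\eta|\le|v-w|$. The first is the reverse triangle inequality. For the second, setting $c=\xi\cdot\eta\in[-1,1]$ and expanding,
\begin{align*}
 |v-w|^2-r^2|\xi-\eta|^2\ =\ s^2-r^2-2rc(s-r)\ \ge\ s^2-r^2-2r(s-r)\ =\ (s-r)^2\ \ge\ 0,
\end{align*}
where the middle inequality uses $c\le1$ together with $s-r\ge0$. Adding the two contributions yields $|\hat f(v)-\hat f(w)|\le(\|f\|_\mL-\|f\|_\infty)|v-w|+\|f\|_\infty|v-w|=\|f\|_\mL|v-w|$.

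The one genuinely non-automatic point is the ordering $r\le s$: the bound $r\,|\xi-\eta|\le|v-w|$ does fail without it (for instance $v=(2,0)$, $w=(0,1)$ in $\R^2$), so one must first peel off the radial discrepancy through the term $(r-s)g(\eta)$ and be sure that the surviving angular term carries the shorter vector as its coefficient. Everything after that reduction is routine triangle-inequality bookkeeping; in particular the antipodal identification on $P^{d-1}$ enters only through the innocuous inequality $d_P(\pi\xi,\pi\eta)\le|\xi-\eta|$.
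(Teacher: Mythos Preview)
Your proof is correct and follows essentially the same strategy as the paper's: split $\hat f(v)-\hat f(w)$ into an angular piece weighted by the shorter radius and a purely radial piece, then bound each by $|v-w|$. The only cosmetic differences are that the paper first normalizes the shorter vector to the unit sphere and rescales at the end, and that it proves your inequality $r\,|\xi-\eta|\le|v-w|$ geometrically (observing that the angle $\xi\,\eta\,w$ is obtuse, since $(\xi-\eta)\cdot(w-\eta)=(s-1)(\xi\cdot\eta-1)\le0$ when $r=1$) rather than via your direct expansion.
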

\begin{proof}  Let $f:P^{d-1}\rightarrow \mathbb{R}$ be 
Lipschitz with constant $B_1=\sup_{x\neq y}\frac{|f(y)-f(x)|}{d_P(x,y)}$.  Let $\|f\|_\infty=B_2$.  We will show that $\hat{f}$ is Lipschitz with 
constant $\|f\|_\mL = B_1+B_2$.\\

Let $x,y$ be arbitrary vectors and without loss of generality, assume $|x|\leq |y|$.  If $x=0$, then 
\begin{align*} 
  |\hat{f}(x)-\hat{f}(y)|=|\hat{f}(y)|\leq B_2|y|\leq (B_1+B_2)|x-y|.
 \end{align*}

Next assume that $|x|=1$, and $|y|=r\geq 1$.  Let $y=rz$.
\[\begin{array}{rl}|\hat{f}(x)-\hat{f}(y)|& \leq |\hat{f}(x)-\hat{f}(z)|+|\hat{f}(z)-\hat{f}(y)| \\
   & \leq |f(x)-f(z)|+|f(z)-rf(z)|\\
   & \leq B_1|x-z|+|f(z)|(r-1)\\
   & \leq B_1|x-z|+B_2|y-z|.
  \end{array}\]
We note that $y$ lies outside the unit sphere, $x$ is a point on the unit sphere,
and $z$ is the closest point on the sphere to $y$.  It follows that
$|z-y|\leq|x-y|$, and $|x-z|\leq |x-y|$ (angle $xzy$ is obtuse).\\

Finally, if $|x|=s\neq 0$, then
\begin{align*}
|\hat{f}(x)-\hat{f}(y)| & = s|\hat{f}(x/s)-\hat{f}(y/s)| \\
 & \leq (B_1+B_2)s|x/s-y/s| \\
 & =(B_1+B_2)|x-y|.
\end{align*}
\end{proof}

\begin{proof}
Using the lemma, we find
\[ \begin{array}{rl} \left|\int_0^1 \hat{f}(\alpha'(t))-\hat{f}(\beta'(t))\, dt\right| & \leq \int_0^1|\hat{f}(\alpha'(t))-\hat{f}(\beta'(t))|\, dt \\
    &  \leq \|f\|_\mL \int_0^1|\alpha'(t)-\beta'(t)|\, dt \\
    &  =\|f\|_\mL \|\alpha-\beta\|_{AC}\end{array}.\]
Taking the supremum over all Lipschitz $f$ with $\|f\|_\mL \leq 1$, we obtain inequality \eqref{muCont}.\\

For all $\xi\in S^{d-1}$, we note that $f_\xi(x)=|\xi\cdot \pi^{-1}(x)|$ is a Lipschitz function on $P^{d-1}$ with $\|f_\xi\|_{\mL}\leq 2$.  Therefore
\begin{align*}
 \|M_\mu - M_\nu\|_\infty &= \sup_{\xi\in S^{d-1}} \left|\int f_\xi\, d(\mu-\nu)\right| \\
 & \leq 2\|\mu-\nu\|_w
\end{align*}
\end{proof}

\section{Operations on curves}
In this section, we consider operations on curve that preserve the induced measure on projective space.

\begin{theorem}\label{constspeedT}
Given $\alpha\in AC$, there exists a right-continuous, strictly monotonic function 
$\ti{\phi}:[0,1]\rightarrow [0,1]$ with the following properties:
\begin{enumerate}[i.]
\item  For almost all $s$, the composition $\alpha\circ\ti{\phi}$ satisfies
\begin{align*}
 |(\alpha\circ\ti{\phi})'(s)|=\|\alpha\|_{AC}\hspace{.3cm} \text{and}\\
(\alpha\circ\ti{\phi})'(s)=\alpha'(\ti{\phi}(s))\ti{\phi}'(s).
\end{align*}
\item  If $F:[0,1]\rightarrow \R^k$ is absolutely continuous and
\begin{align*}
 \{t\, |\, F'(t) \neq 0\} \subset \{t\, |\, \alpha'(t)\neq 0\},
 \end{align*}
 then $F\circ\ti{\phi}$ is absolutely continuous.  In particular $\alpha\circ\ti{\phi}$ is absolutely continuous.
\item  $\mu_{\alpha}=\mu_{\alpha\circ \ti{\phi}}$.  In particular $\|\alpha\|_{AC}=\|\alpha\circ\ti{\phi}\|_{AC}$.
\end{enumerate}
\end{theorem}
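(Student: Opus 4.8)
The plan is to let $\ti\phi$ be the (right-continuous) generalized inverse of the normalized arclength function of $\alpha$, and to transport all three conclusions through a change-of-variables identity for that function, exploiting repeatedly that the only places where $\ti\phi$ misbehaves are the places where $\alpha'$ vanishes. Write $L=\|\alpha\|_{AC}$; if $L=0$ then $\alpha$ is constant modulo translation, $\mu_\alpha=0$, and $\ti\phi=\mathrm{id}$ works, so assume $L>0$. Put $\sigma(t)=L^{-1}\int_0^t|\alpha'(u)|\,du$, which is absolutely continuous and non-decreasing with $\sigma(0)=0$, $\sigma(1)=1$, and $\sigma'(t)=|\alpha'(t)|/L$ for a.e.\ $t$, and define
\[
\ti\phi(x)=\inf\big(\{t\in[0,1]:\sigma(t)>x\}\cup\{1\}\big).
\]
First I would record the elementary facts: $\ti\phi$ is right-continuous (since $\{t:\sigma(t)>x\}=\bigcup_{x'>x}\{t:\sigma(t)>x'\}$ and $\inf$ commutes with this union), strictly increasing (here continuity of $\sigma$ is used, since $\sigma\circ\ti\phi=\mathrm{id}$ and $\sigma$ is monotone), satisfies $\sigma\circ\ti\phi=\mathrm{id}$ and $\ti\phi(x)=\sup\{t:\sigma(t)\le x\}$, and has at most countably many discontinuities, each a jump occurring exactly across one of the (at most countably many) closed intervals on which $\sigma$ is constant.

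Next I would prove an \emph{absolute-continuity lemma}. Since $\sigma$ is AC it maps null sets to null sets, with the refinement $m(\sigma(E))\le\int_E|\sigma'|=L^{-1}\int_E|\alpha'|$ (here $m$ is Lebesgue measure). From this and $\sigma\circ\ti\phi=\mathrm{id}$ I would deduce: (a) if $N$ is null then $\ti\phi^{-1}(N)\subseteq\sigma(N)$ is null; (b) $\sigma(\{\alpha'=0\})$ is null, so $\alpha'(\ti\phi(x))\ne 0$ for a.e.\ $x$; (c) $\ti\phi(\sigma(u))=u$ for a.e.\ $u\in\{\sigma'\ne 0\}$ (the set where this fails lies in the interiors of the flat intervals of $\sigma$, where $\sigma'=0$ a.e.). I would also record the substitution formula $\int_{\sigma(a)}^{\sigma(b)}\phi(w)\,dw=\int_a^b\phi(\sigma(u))\sigma'(u)\,du$, valid because $\sigma$ pushes the measure $\sigma'\,dm$ forward to Lebesgue measure (checked on initial segments), together with its consequences $\int_0^1 g(\ti\phi(w))\,dw=\int_0^1 g(u)\sigma'(u)\,du$ and the local version $\int_{[\ti\phi(y),\ti\phi(x)]}g(u)\sigma'(u)\,du=\int_y^x g(\ti\phi(w))\,dw$, obtained by substituting $\phi=g\circ\ti\phi$ and applying (c).

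For parts (i) and (iii): the curve $\psi:=\alpha\circ\ti\phi$ is Lipschitz with constant $L$, because for $y<x$ one has $|\psi(x)-\psi(y)|\le\int_{\ti\phi(y)}^{\ti\phi(x)}|\alpha'|=L\big(\sigma(\ti\phi(x))-\sigma(\ti\phi(y))\big)=L(x-y)$; in particular $\psi\in AC$, $\psi'$ exists a.e., and $|\psi'|\le L$. For a.e.\ $x$ the point $\ti\phi(x)$ is, by the lemma, a point of differentiability of both $\alpha$ and $\sigma$ at which $\sigma'=|\alpha'|/L\ne 0$, and $\ti\phi$ is continuous and differentiable at $x$; writing $k=\ti\phi(x+h)-\ti\phi(x)\to 0$, the difference quotient of $\psi$ yields $\psi'(x)=\alpha'(\ti\phi(x))\ti\phi'(x)$, and the same computation applied to $\sigma\circ\ti\phi=\mathrm{id}$ gives $1=\sigma'(\ti\phi(x))\ti\phi'(x)$, hence $\ti\phi'(x)=L/|\alpha'(\ti\phi(x))|$ and $|\psi'(x)|=L$ a.e. --- which is (i). For (iii) I would combine $\psi'(x)=L\,\alpha'(\ti\phi(x))/|\alpha'(\ti\phi(x))|$ with the homogeneity of $\hat f$ to get $\hat f(\psi'(x))=\frac{L}{|\alpha'(\ti\phi(x))|}\hat f(\alpha'(\ti\phi(x)))$ a.e., then apply $\int_0^1 g(\ti\phi(x))\,dx=\int_0^1 g(u)\sigma'(u)\,du$ with the bounded function $g(u)=L\hat f(\alpha'(u))/|\alpha'(u)|$ to collapse $I_\psi(f)=\int_0^1\hat f(\psi'(x))\,dx$ to $\int_0^1\hat f(\alpha'(u))\,du=I_\alpha(f)$; since this holds for every $f\in C(P^{d-1})$, uniqueness in the Riesz representation gives $\mu_\psi=\mu_\alpha$, and taking $\hat f(v)=|v|$ yields $\|\psi\|_{AC}=L$.

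For part (ii): given AC $F$ with $\{F'\ne 0\}\subseteq\{\alpha'\ne 0\}$, note that across each flat interval of $\sigma$ one has $\alpha'=0$ a.e., hence $F'=0$ a.e., hence $F$ is constant there, so $F\circ\ti\phi$ has no jumps and is continuous. For absolute continuity I would set $h=L\,|F'(\ti\phi(\cdot))|/|\alpha'(\ti\phi(\cdot))|$ on $\sigma(\{F'\ne 0\})$ and $0$ elsewhere; the lemma gives $|F'(u)|=h(\sigma(u))\sigma'(u)$ a.e., whence $\|h\|_{L^1}=\int_{\{F'\ne 0\}}|F'|<\infty$ and, for $y<x$, $|F(\ti\phi(x))-F(\ti\phi(y))|\le\int_{[\ti\phi(y),\ti\phi(x)]}|F'(u)|\,du=\int_y^x h(w)\,dw$; summing over disjoint subintervals and using $h\in L^1$ shows $F\circ\ti\phi$ is AC. I expect the genuine obstacle to be precisely this circle of issues: $\ti\phi$ is \emph{not} absolutely continuous (it has honest jumps, and its inverse $\sigma$ may have a positive-measure critical set), so no standard chain-rule or substitution theorem applies off the shelf; the whole argument hinges on confining every pathology of $\ti\phi$ to the zero set of $\alpha'$ --- equivalently the flat intervals and critical set of $\sigma$ --- which is invisible to $\alpha\circ\ti\phi$ and, under the stated hypothesis, to $F\circ\ti\phi$.
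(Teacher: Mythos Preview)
Your argument is correct, and it takes a genuinely different route from the paper's. The paper constructs $\ti\phi$ in \emph{two} stages: it first inverts the normalized indicator arclength $\psi_E(t)=\lambda([0,t]\cap E)/\lambda(E)$ with $E=\{\alpha'\ne 0\}$, obtaining a monotone $\phi$ with $\|\phi'\|_\infty\le\lambda(E)\le 1$, and only afterwards composes with the inverse of $\ell\circ\phi$. The paper includes a remark explaining why: its proof that $\alpha\circ\phi\in AC$ runs through the $\ep$--$\delta$ definition and uses $\|\phi'\|_\infty\le 1$ to control images of short interval systems, and this argument breaks down for the direct inverse of $\ell=\sigma$ because $\ti\phi'$ need not be bounded. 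You sidestep this obstruction entirely: for $\alpha\circ\ti\phi$ you get Lipschitz (hence AC) in one line from $\sigma\circ\ti\phi=\mathrm{id}$, and for $F\circ\ti\phi$ you exhibit an explicit $L^1$ majorant via the pushforward identity $\int g(\ti\phi)\,dw=\int g\,\sigma'\,du$. The paper's detour buys a proof of absolute continuity that uses only the definition, at the price of a longer construction; your one-step approach is shorter and more conceptual, at the cost of leaning more heavily on the change-of-variables lemma (which you do justify). One small cosmetic point: your definition of $h$ on $\sigma(\{F'\ne 0\})$ is slightly awkward to parse; it is cleaner to set $h(w)=L\,|F'(\ti\phi(w))|/|\alpha'(\ti\phi(w))|$ wherever $\alpha'(\ti\phi(w))\ne 0$ and $0$ elsewhere, and then verify $|F'(u)|=h(\sigma(u))\sigma'(u)$ a.e.\ by splitting on whether $\ti\phi(\sigma(u))=u$, exactly as you indicate.
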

This (probably well-known fact) is proved in detail in the appendix.\\

Given a curve $\alpha\in AC$, define $\alpha^{-}(t)=\alpha(1-t)$.  
\begin{definition}
 Given two curves $\alpha$ and $\beta$ in $AC$, let $p=\frac{\|\alpha\|_{AC}}{\|\alpha\|_{AC}+\|\beta\|_{AC}}$, and define the {\em concatenation} $\alpha\oplus\beta$ by
\[\alpha\oplus\beta(t)=\left\{\begin{array}{lr}
                               \alpha(\frac{t}{p}), & 0\leq t\leq p;\\
                               \beta(\frac{t-p}{1-p})+\alpha(1)-\beta(0),  & p\leq t \leq 1.\\
                              \end{array}\right.\]
\end{definition}
Concatenation is associative and satisfies $\alpha\oplus0=0\oplus\alpha=\alpha$.  If both $\alpha$ and $\beta$ have constant speed, then so does their
concatenation $\alpha\oplus\beta$.

\begin{proposition}\label{basics}
For any two curves $\alpha, \beta\in AC$,
\begin{enumerate}[(a)]
 \item \label{trans} $\mu_{\alpha+v}=\mu_\alpha$ for any $v\in\mathbb{R}^d$;
 \item \label{scale} $\mu_{c\alpha}=|c|\mu_\alpha$ for any scalar $c\in \mathbb{R}$;
 \item \label{reversal} $\mu_{\alpha^-}=\mu_\alpha$;
 \item \label{concat} $\mu_{\alpha\oplus\beta}=\mu_\alpha+\mu_\beta$;
 \item \label{reparam} If $\phi:[0,1]\rightarrow[0,1]$ is monotonic, absolutely continuous, and surjective, then 
 \[\mu_{\alpha\circ\phi}=\mu_\alpha.\]
\end{enumerate}
\end{proposition}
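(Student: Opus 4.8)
The plan is to reduce every item to a statement about the linear functionals $I_{(\cdot)}$ of \eqref{functional}: by the Riesz representation theorem two finite positive Borel measures on $P^{d-1}$ coincide as soon as they induce the same functional on $C(P^{d-1})$, so in each case it suffices to show that $I$ applied to the two curves in question agrees on every continuous $f$. The one fact I would record at the outset is the homogeneity and evenness of the extension, namely $\hat f(\lambda v)=|\lambda|\,\hat f(v)$ for all real $\lambda$ and all $v\in\R^d$, which is immediate from $\hat f(r\xi)=|r|(f\circ\pi)(\xi)$ since $\pi$ identifies antipodes; in particular $\hat f(-v)=\hat f(v)$. Part \ref{trans} is then instantaneous, since $(\alpha+v)'=\alpha'$ almost everywhere.

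For part \ref{scale} I would use $(c\alpha)'=c\alpha'$ and pull the $|c|$ out through homogeneity. For part \ref{reversal} I would note that $\alpha^-$ is absolutely continuous (composition of $\alpha$ with an affine map), that $(\alpha^-)'(t)=-\alpha'(1-t)$ a.e., and then combine $\hat f(-v)=\hat f(v)$ with the substitution $s=1-t$. For part \ref{concat}, assuming $0<p<1$ (the boundary cases $p=0,1$ being covered by the identity $\alpha\oplus 0=0\oplus\alpha=\alpha$), I would first observe that $\alpha\oplus\beta$ is absolutely continuous because it is so on $[0,p]$ and on $[p,1]$ and the two pieces agree at $t=p$; then I would split $I_{\alpha\oplus\beta}(f)$ at $p$, use homogeneity to extract the factors $1/p$ and $1/(1-p)$ coming from the chain rule, and change variables $s=t/p$ and $s=(t-p)/(1-p)$ so the Jacobians cancel, leaving $I_\alpha(f)+I_\beta(f)$.

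Part \ref{reparam} is the substantive one, and I would split it into two steps. First, $\alpha\circ\phi$ is again absolutely continuous, with the chain rule $(\alpha\circ\phi)'(t)=\alpha'(\phi(t))\,\phi'(t)$ holding for almost every $t$; since a general composition of absolutely continuous maps need not be absolutely continuous, here I would invoke the classical fact that this does hold when the inner map $\phi$ is \emph{monotone} and absolutely continuous. Granting that, and writing $g=\hat f\circ\alpha'\in L^1[0,1]$ (using that $\hat f$ is continuous and $\alpha'\in L^1$), homogeneity turns $I_{\alpha\circ\phi}(f)$ into $\int_0^1|\phi'(t)|\,g(\phi(t))\,dt$, and the change-of-variables formula for a monotone, absolutely continuous, surjective $\phi$, namely $\int_0^1|\phi'(t)|\,g(\phi(t))\,dt=\int_0^1 g(s)\,ds$ for $g\in L^1[0,1]$, identifies this with $I_\alpha(f)$.

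The main obstacle is entirely in part \ref{reparam}: the preservation of absolute continuity and the chain rule under a monotone absolutely continuous reparametrization, together with the accompanying $L^1$ change-of-variables formula. Both are standard and are essentially the same tools already underlying Theorem \ref{constspeedT}, so I would simply cite them; the remaining items are routine bookkeeping with the homogeneity of $\hat f$, with the only other points worth a word being the absolute continuity of $\alpha^-$ and of $\alpha\oplus\beta$.
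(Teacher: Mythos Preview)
Your proposal is correct and follows essentially the same approach as the paper: reduce each identity to one about the functionals $I_{(\cdot)}$, use the homogeneity and evenness of $\hat f$ for \ref{trans}--\ref{reversal}, split the integral at $p$ and rescale for \ref{concat}, and invoke the monotone-$AC$ chain rule and change-of-variables formula for \ref{reparam} (which the paper simply defers to the proof of Theorem~\ref{constspeedT}). You supply considerably more detail than the paper's terse argument, but the route is the same.
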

\begin{proof}
We prove these statements by proving analogous statements for the corresponding positive linear functionals $I_\alpha:C(P^{d-1})\rightarrow \R$ defined by
equation \eqref{functional}. \ref{trans} and \ref{scale} are clear.  \ref{reversal} is true by a change of variables and the fact that $\hat{f}$ is even for all 
$f\in C(P^{d-1})$.  The proof of \ref{reparam} is the same as the proof that $\mu_\alpha=\mu_{\alpha\circ\ti{\phi}}$ in theorem \ref{constspeedT}.\\

To prove \ref{concat}, note that the integral of $M_{\alpha\oplus\beta}$ can be written as the sum of two integrals over the intervals $[0,p]$ and $[p,1]$
 respectively.  The first integral is equal to $M_\alpha$ by a reparametrization.  The second is equal to $M_\beta$ by a translation and a reparametrization.
\end{proof}

\begin{definition}\label{ast}
Let $x \in L^\infty([0,1],\mathbb{R})$ and $\alpha\in AC$.  Then the function $x\alpha'$ belongs to $L^1[0,1]$, and it determines an absolutely continuous curve
\begin{align*}
\int_0^t x(s)\alpha'(s)\, ds.
\end{align*}
We let $x\ast\alpha$ denote the constant speed reparametrization of this curve.  
In the special case that $x$ is constant, then $x\ast\alpha=x\alpha$ (modulo translation).
\end{definition}

\begin{proposition}\label{recombination}
 Let $x(t)$ and $y(t)$ belong to $L^\infty([0,1],\mathbb{R})$, and assume $|x(t)|+|y(t)|=1$ almost everywhere. Then
 \[\mu_{x\ast\alpha\oplus y\ast\alpha}=\mu_\alpha.\]
\end{proposition}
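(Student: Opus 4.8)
The plan is to push everything through the positive linear functionals $I_\gamma$ of \eqref{functional} and reduce the statement to a pointwise identity coming from the homogeneity of $\hat f$. First I would use part \ref{concat} of Proposition \ref{basics} to split the measure of the concatenation: $\mu_{x\ast\alpha\oplus y\ast\alpha}=\mu_{x\ast\alpha}+\mu_{y\ast\alpha}$, so it suffices to prove $\mu_{x\ast\alpha}+\mu_{y\ast\alpha}=\mu_\alpha$.

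Next I would strip off the constant-speed reparametrization hidden in the definition of $\ast$. Since $x\in L^\infty$ and $\alpha'\in L^1$, the product $x\alpha'$ lies in $L^1[0,1]$ and $\gamma_x(t)=\int_0^t x(s)\alpha'(s)\,ds$ is an element of $AC$; by Definition \ref{ast} the curve $x\ast\alpha$ is a constant-speed reparametrization of $\gamma_x$, so Theorem \ref{constspeedT}(iii) gives $\mu_{x\ast\alpha}=\mu_{\gamma_x}$, and likewise $\mu_{y\ast\alpha}=\mu_{\gamma_y}$ with $\gamma_y(t)=\int_0^t y(s)\alpha'(s)\,ds$. Now $\gamma_x'=x\alpha'$ and $\gamma_y'=y\alpha'$ almost everywhere, and the degree-$1$ homogeneity $\hat f(cv)=|c|\,\hat f(v)$ (immediate from $\hat f(r\xi)=|r|(f\circ\pi)(\xi)$ after writing $\alpha'(t)=|\alpha'(t)|\xi$) gives, for every $f\in C(P^{d-1})$,
\[
I_{\gamma_x}(f)+I_{\gamma_y}(f)=\int_0^1\bigl(|x(t)|+|y(t)|\bigr)\hat f(\alpha'(t))\,dt=\int_0^1\hat f(\alpha'(t))\,dt=I_\alpha(f),
\]
where the middle equality is exactly the hypothesis $|x|+|y|=1$ a.e. Since $\int f\,d(\mu_{\gamma_x}+\mu_{\gamma_y})=I_{\gamma_x}(f)+I_{\gamma_y}(f)=I_\alpha(f)=\int f\,d\mu_\alpha$ for all continuous $f$, uniqueness in the Riesz representation theorem forces $\mu_{\gamma_x}+\mu_{\gamma_y}=\mu_\alpha$, and combining with the first step finishes the proof.

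I do not expect a genuine obstacle here; the content is entirely in lining up the definitions of $\ast$, $\oplus$, and $\hat f$. The only points needing a word of care are: (1) that $\hat f(x(t)\alpha'(t))=|x(t)|\,\hat f(\alpha'(t))$ holds pointwise a.e., which is the sole place the scalar functions $x,y$ enter the transform; and (2) the degenerate situation in which $x$ or $y$ vanishes on a set of full measure, so that $\gamma_x$ or $\gamma_y$ is the zero curve and the weight $p$ in the concatenation formally degenerates — this is already subsumed by the conventions $\alpha\oplus 0=0\oplus\alpha=\alpha$ and by part \ref{concat} of Proposition \ref{basics} covering that case, so it requires no separate treatment.
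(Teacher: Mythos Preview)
Your proposal is correct and follows essentially the same route as the paper: split via concatenation, compute each functional as $\int_0^1 \hat f(x(t)\alpha'(t))\,dt$, pull out $|x(t)|$ by homogeneity, and use $|x|+|y|=1$. The only difference is cosmetic---the paper writes $I_{x\ast\alpha}(f)=\int_0^1 \hat f(x(t)\alpha'(t))\,dt$ in one step, while you explicitly justify that equality by invoking Theorem~\ref{constspeedT}(iii) to pass from $x\ast\alpha$ back to $\gamma_x$; your version is more careful but not a different argument.
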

\begin{proof}
 \begin{align*}
    I_{x\ast\alpha\oplus y\ast\alpha}(f) & =I_{x\ast\alpha}(f)+I_{y\ast\alpha}(f)\\
    & =\int_0^1 \hat{f}\left(x(t)\alpha'(t)\right)\, dt + \int_0^1 \hat{f}\left(y(t)\alpha'(t)\right)\, dt\\
    & =\int_0^1(|x(t)|+|y(t)|)\hat{f}(\alpha'(t))\, dt \\
    & = I_\alpha(f).\\
   \end{align*}
\end{proof}

\begin{example}\label{separation}
 Let $x(t)=p$, and $y(t)=q=1-p$, with both constants between $0$ and $1$.  Then $\mu_\alpha=\mu_{p\alpha\oplus q\alpha}.$
\end{example}

\begin{example}
Let $\{U_i:\, 1\leq i \leq n\}$ be pairwise disjoint measurable subsets of the unit interval $[0,1]$ such that $\bigcup_i U_i=[0,1]$.  And let $\chi_{U_i}$ be the indicator function on $U_i$:
\[ \chi_{U_i}(t)=\left\{
\begin{array}{ll} 1, & t\in U_i; \\ 0, & \text{otherwise}.\end{array}\right. .\]
Since $\sum_i |\chi_{U_i}(t)| = 1$, 
\begin{equation}\label{Rearrange}
\mu_{\bigoplus_i(\chi_{U_i}\ast\alpha)}=\mu_\alpha.
\end{equation}
\end{example}

\section{Construction of a right inverse from $\mM^+(P^{d-1})$ back to $AC$}
In this section, our goal is to show that every positive Borel measure on projective space arises as the measure $\mu_\alpha$ associated to an $AC$ curve $\alpha$.  We do this by approximating measures with discrete measures and 
$AC$ curves with broken lines.

\subsection{Partitions and discrete measures (notation)}
\begin{definition}
We call a measure $\mu\in \mM(P^{d-1})$ \emph{discrete} if it is a \emph{finite} linear combination of point masses.  The space of discrete
measures will be denoted $\mD(P^{d-1})$.
\end{definition}

\begin{definition}
 Let $X$ be a set endowed with a metric topology.
 We define a \emph{partition} $\mP$ of $X$ to be a finite collection of pairwise disjoint Borel sets $\mP=\{U_1,\, U_2,\,\dots,\, U_N\}$ whose union is $X$.  
 A \emph{tagged partition} $\mP^t$ is a collection of pairs $\mP^t=\{(U_i,x_i)\}_i$, such that:
 \begin{enumerate}[1.]
  \item The elements $U_i$ are Borel subsets of $X$ which form a partition $\mP=\{U_i\}_i$;
  \item For each $i$, $x_i$ is an element of $U_i$.
 \end{enumerate}
 The norm of a partition is defined to be the maximum of the diameters of the sets $U_i$:
 \begin{align*}
  \|\mP^t\|=\|\mP\|=\max_i \diam(U_i).
 \end{align*}
 A partition $\ti{\mP}$ is a \emph{refinement} of $\mP$ if each element of $\ti{\mP}$ is a subset of some element of $\mP$.  If the partitions are tagged, then we also require that the tags of $\mP^t$ be included in
the set of tags of $\ti{\mP^t}$.
\end{definition}

\begin{proposition}\label{dapprox}
 Let $\mu\in \mM(P^{d-1})$ be a finite Borel measure, and let $\mP^t=\{(U_i,x_i)\}_i$ be a tagged partition of $P^{d-1}$.    Then
 \begin{align*}
  \left\| \mu - \sum_i \mu(U_i)\delta_{x_i}\right\|_w < \|\mP^t\|\, |\mu|(P^{d-1}).
 \end{align*}
In particular, the set of discrete measures, $\mD(P^{d-1})$, is dense in the set of finite Borel measures $\mM(P^{d-1})$.
 \end{proposition}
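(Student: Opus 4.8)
The plan is to test the difference measure $\mu-\nu$, where $\nu:=\sum_i\mu(U_i)\delta_{x_i}$, against an arbitrary Lipschitz function $f$ with $\|f\|_\mL\le 1$. Writing $L:=\sup_{x\neq y}|f(x)-f(y)|/d_P(x,y)$ for the Lipschitz constant of $f$, the normalization means $L+\|f\|_\infty=\|f\|_\mL\le 1$, so in particular $L\le 1$ and $\|f\|_\infty\le 1-L$. Since the $U_i$ form a partition of $P^{d-1}$ and $x_i\in U_i$, one has the identity
\[
\int f\, d\mu-\int f\, d\nu=\sum_i\int_{U_i}\bigl(f(x)-f(x_i)\bigr)\, d\mu(x).
\]
For $x\in U_i$ we have $d_P(x,x_i)\le\diam(U_i)\le\|\mP^t\|$, hence $|f(x)-f(x_i)|\le L\|\mP^t\|$; integrating against $d|\mu|$ and summing over $i$ gives
\[
\left|\int f\, d(\mu-\nu)\right|\le L\,\|\mP^t\|\,|\mu|(P^{d-1})\le\|\mP^t\|\,|\mu|(P^{d-1}).
\]
Taking the supremum over such $f$ already yields the non-strict bound $\|\mu-\nu\|_w\le\|\mP^t\|\,|\mu|(P^{d-1})$, and since $P^{d-1}$ is compact it admits tagged partitions of arbitrarily small norm (cover by finitely many small balls and disjointify, discarding empty pieces); letting $\|\mP^t\|\to 0$ then proves that $\mD(P^{d-1})$ is dense in $\mM(P^{d-1})$.

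The one point requiring care is the strict inequality, which is where I expect the only real (and still minor) obstacle. The Lipschitz estimate above is tight only when $L=1$, but $L=1$ forces $\|f\|_\infty=0$ and hence $f\equiv 0$, which makes the left-hand side vanish — so no single $f$ attains equality, yet this alone does not rule out the supremum reaching $\|\mP^t\|\,|\mu|(P^{d-1})$ in the limit. To produce a genuine gap I would combine the Lipschitz estimate with the complementary bound $|\int f\, d(\mu-\nu)|\le\|f\|_\infty\,|\mu-\nu|(P^{d-1})\le 2\|f\|_\infty\,|\mu|(P^{d-1})$, using $|\nu|(P^{d-1})=\sum_i|\mu(U_i)|\le|\mu|(P^{d-1})$. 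Optimizing over the split $L+\|f\|_\infty\le 1$ then gives
\[
\left|\int f\, d(\mu-\nu)\right|\le|\mu|(P^{d-1})\,\min\bigl(L\|\mP^t\|,\ 2(1-L)\bigr)\le\frac{2\|\mP^t\|}{2+\|\mP^t\|}\,|\mu|(P^{d-1}),
\]
and since $\frac{2\|\mP^t\|}{2+\|\mP^t\|}<\|\mP^t\|$ whenever $\|\mP^t\|>0$, taking the supremum over $f$ yields the strict inequality.

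I would close with the small bookkeeping that $\|\mP^t\|>0$ always holds here: a finite partition of $P^{d-1}$ into singletons is impossible for $d\ge 2$, so some $U_i$ has positive diameter. It is worth flagging explicitly that the stated strict inequality silently excludes the degenerate cases $\mu=0$ (both sides are $0$) and $d=1$ (where $P^0$ is a point and $\|\mP^t\|=0$); for every nonzero $\mu$ with $d\ge 2$ the argument above applies verbatim. The underlying approximation estimate itself is the routine one-line computation in the first paragraph, so no lengthy calculation is involved.
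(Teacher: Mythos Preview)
Your core argument---partitioning the integral over the $U_i$ and bounding $|f(y)-f(x_i)|$ by $\|f\|_\mL\diam(U_i)$---is exactly the paper's proof, line for line. You go further than the paper in one respect: the paper's own proof only establishes the non-strict bound $\leq \|\mP^t\|\,|\mu|(P^{d-1})$ despite the strict inequality in the statement, whereas your optimization between the Lipschitz bound $L\|\mP^t\|$ and the sup-norm bound $2(1-L)$ correctly produces the sharper constant $\tfrac{2\|\mP^t\|}{2+\|\mP^t\|}$ and hence a genuine strict inequality when $\|\mP^t\|>0$ and $\mu\neq 0$. Your flagging of the degenerate cases $\mu=0$ and $d=1$ is also apt; the paper simply does not address them.
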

 \begin{proof}
 Let $\mu_0$ denote the discrete measure being compared to $\mu$. If $f\in C(P^{d-1})$ is Lipschitz with $\|f\|_\mL \leq 1$, then
 \begin{align*}
  \left| \int f\, d(\mu-\mu_0)\right| & \leq \sum_i \left|\int_{U_i} f\, d(\mu-\mu_0)\right|\\
  & = \sum_i \left| \int_{U_i} \left(f(y)-f(x_i)\right)\, d\mu(y) \right|\\
  & \leq \sum_i \|f\|_{\mL}\diam(U_i)\,|\mu|(U_i) \\
  & \leq \|\mP^t\|\, |\mu|(P^{d-1}).
 \end{align*}
\end{proof}

\begin{definition}
 An \emph{ordered partition} $\mP=\{U_1,\, U_2,\, \dots\, U_n\}$ is a partition in which the sets $U_i$ are assigned a definite order.  
The ordered partition $\ti{\mP}=\{V_1,\, \dots,\, V_N\}$ is an \emph{ordered refinement} of $\mP$ if 
\begin{enumerate}[1.]
 \item the partition $\ti{\mP}$ is an ordinary refinement of $\mP$;
 \item If $i<j$ and $V_i\subset U_p$ and $V_j\subset U_q$, then $p\leq q$.
\end{enumerate}
Similarly, we define partitions that are tagged and ordered, and we define the relation of ordered refinement among these as well.
\end{definition}

\subsection{Constructing the right inverse}
Our first problem is that a directed line segment produces the same measure when the direction of the segment is reversed.  But given a measure, we would like 
to associate just one curve.  Therefore, for each element of projective space $x$, we would like to fix a representative element $x^+$ in $S^{d-1}$.  We do this by fixing a subset $X$ of the 
sphere which is in one to one correspondence with $P^{d-1}$ via the projection $\pi:P^{d-1}\rightarrow S^{d-1}$.\\

Let $\R^+$ be the set of positive real numbers, and let $y=(y_1,\, \dots,\, y_d)$ be coordinates 
in $\mathbb{R}^d$.  For each $i=1,\dots, d$, let $H_i$ be open half space $\{y_i > 0\}$.  Let $X$ denote the set
\begin{align}\label{X}
 X = S^{d-1} \cap \bigcup_{k=1}^d(\{y_i = 0, i> k\}\cap H_k).
\end{align}
Then $\pi$ maps $X$ bijectively onto $P^{d-1}$.  For all Borel sets $U\in P^{d-1}$, 
let $U^+$ denote $\pi^{-1}(U)\cap X$, and let $U^-=-U^+$ (which is disjoint from $U^+$).  Similarly, if $x\in P^{d-1}$,
we let $x^+=X\cap \pi^{-1}(x)$.\\

Let $\mP^t=\{(U_i,x_i):1\leq i\leq n\}$ be a tagged, ordered partition of $P^{d-1}$, and let $\alpha\in AC$.  For each $i=1,\dots ,n$, we define a function 
$h(\mP,\alpha)_i:[0,1]\rightarrow\R$ by the formula
\begin{align}\label{us}
h(\mP,\alpha)_i(t)=\left\{\begin{array}{ll} 1,& \alpha'(t)\in \R^+U^+_{i};\\
                           -1, & \alpha'(t)\in -\R^+U^+_{i};\\
                           0,& \text{otherwise}.
                          \end{array}\right.
 \end{align}
Through these functions, we define a mapping $\mF_\mP:AC\rightarrow AC$ given by
\begin{align*}
\mF_\mP (\alpha) = \bigoplus_i h(\mP,\alpha)_i\ast \alpha.
\end{align*}

Geometrically, this operation represents a sort of surgical rearrangement of the pieces of $\alpha$.  For each $i$, we cut out the parts of $\alpha$ that point
in directions parallel to the elements of $U_i$.  We then normalize these pieces so that they all point in directions of $U_i^+\subset X$, defined by \eqref{X}. Then we glue them together to
form $h(\mP,\alpha)_i\ast \alpha$.  Having done this for each $i$, we then concatenate these separate parts together.  The resulting curve 
still belongs to $AC$, it has constant speed, and it produces the same measure in $\mM(P^{d-1})$.  The advantage of the curve $\mF_\mP (\alpha)$ is 
that there is a partition of the interval
\begin{align*}
 0=t_0 \leq t_1 \leq \dots \leq t_n=1,
\end{align*}
such that for almost all $t\in (t_{i-1},t_i)$, $\alpha'(t)\in U^+_i$.  Moreover, 
\begin{align*}
(t_i-t_{i-1})\|\alpha\|_{AC} & = \|h(\mP,\alpha)_i\ast \alpha\|_{AC}\\
& = \mu_\alpha(U_i).
 \end{align*}

This suggests a method of finding a piecewise linear curve to approximate $\mF_{\mP}(\alpha)$.  
Let $\mP^t=\{(U_i,x_i)\}$ be ordered and tagged.  For each $x_i\in P^{d-1}$, we let $\hat{x_i}$ denote the linear segment in $AC$:
\begin{align*}
 \hat{x_i}:t\mapsto tx_i^+, 
\end{align*}
where $x_i^+$ is the unit vector representative of $x_i$ in the set $X$.\\

Define $\mG_{\mP^t}:AC\rightarrow AC$ by
\begin{align*}
 \mG_{\mP^t}(\alpha) & = \bigoplus_i \|h(\mP^t,\alpha)_i\ast\alpha\|_{AC}\, \hat{x_i}\\
  & = \bigoplus_i \mu_\alpha(U_i)\hat{x_i}.
 \end{align*}

Elements of the image of $\mG_{\mP^t}$ are broken lines.  Let $\ti{\mP}$ be an ordered refinement of $\mP$.  Then for almost all $t$, there exists a set $U_i$ in $\mP$ such that each of the vectors
 $\left(\mF_{\ti{\mP}}(\alpha)\right)'(t)$, $\left(\mF_{\mP}(\alpha)\right)'(t)$, and $\left(\mG_{\mP^t}(\alpha)\right)'(t)$ lies inside $\|\alpha\|_{AC}U_i^+$.  
 Consequently, we have the following inequalities:
 \begin{align}\label{part}
  \|\mF_{\mP^t}(\alpha)-\mG_{\mP^t}\alpha\|_{AC}\leq \|\mP^t\|\|\alpha\|_{AC};\\
  \|\mF_{\mP^t}(\alpha)-\mF_{\ti{\mP}^t}(\alpha)\|_{AC}\leq \|\mP^t\|\|\alpha\|_{AC};
 \end{align}
 
Given a tagged partition $\mP^t$, we define $G_{\mP^t}:\mM(P^{d-1})\rightarrow AC$ by
\begin{align*}
 G_{\mP^t}(\mu)=\bigoplus_i\mu(U_i)\hat{x_i}.
\end{align*}
The two definitions of $\mG_{\mP^t}$ match insofar as $G_{\mP^t}(\alpha)=G_{\mP^t}(\mu_\alpha)$.  

\begin{theorem}
 The correspondence $\alpha\mapsto \mu_\alpha$ is surjective onto $\mM^+(P^{d-1})$, the set of positive Borel measures.
\end{theorem}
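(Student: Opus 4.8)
The plan is to produce, for an arbitrary $\mu\in\mM^+(P^{d-1})$, an $AC$ curve $\alpha$ with $\mu_\alpha=\mu$ by approximating $\mu$ with the discrete measures of a carefully chosen sequence of broken lines, showing those broken lines are Cauchy in $AC$, and taking $\alpha$ to be their limit. First I would fix a sequence of tagged, ordered partitions $\mP_1^t,\mP_2^t,\dots$ of $P^{d-1}$ such that $\|\mP_k^t\|\le 2^{-k}$ and each $\mP_{k+1}^t$ is an ordered refinement of $\mP_k^t$; this is possible since $P^{d-1}$ is compact metric (take a fresh finite Borel partition into small-diameter cells at each stage, pass to the common refinement with the previous partitions, order and tag compatibly, and let every tag inherited from $\mP_k^t$ be the tag of the unique cell of $\mP_{k+1}^t$ that contains it). Writing $\mP_k^t=\{(U_i^{(k)},x_i^{(k)})\}_i$, I then set $\beta_k=G_{\mP_k^t}(\mu)=\bigoplus_i\mu(U_i^{(k)})\,\hat{x_i^{(k)}}$. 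These are broken lines with $\|\beta_k\|_{AC}=\sum_i\mu(U_i^{(k)})=\mu(P^{d-1})$, and by parts (\ref{scale}) and (\ref{concat}) of Proposition~\ref{basics} their measures are $\mu_{\beta_k}=\sum_i\mu(U_i^{(k)})\delta_{x_i^{(k)}}$ — exactly the discrete measure attached to $\mP_k^t$ in Proposition~\ref{dapprox}.

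The core of the argument is that $(\beta_k)$ is Cauchy in $\|\cdot\|_{AC}$. The mechanism is a rigidity relation: because $\mG_{\mP^t}(\gamma)=G_{\mP^t}(\mu_\gamma)$ depends only on $\mu_\gamma$, and because passing to a refinement of $\mP_k$ does not change the masses it assigns to the old cells, one gets $\mu_{\beta_{k+1}}(U)=\mu(U)$ for every cell $U$ of $\mP_k$, and similarly $\mu_{\beta_{k+1}}(U)=\mu(U)$ for every cell $U$ of $\mP_{k+1}$; hence
\[
 \mG_{\mP_k^t}(\beta_{k+1})=G_{\mP_k^t}(\mu_{\beta_{k+1}})=G_{\mP_k^t}(\mu)=\beta_k,
 \qquad
 \mG_{\mP_{k+1}^t}(\beta_{k+1})=\beta_{k+1}.
\]
Applying the stability estimates \eqref{part} to the curve $\beta_{k+1}$ with the refinement $\mP_{k+1}^t$ of $\mP_k^t$ and inserting $\mF_{\mP_k^t}(\beta_{k+1})$ and $\mF_{\mP_{k+1}^t}(\beta_{k+1})$ between $\beta_k$ and $\beta_{k+1}$ in the triangle inequality, I would obtain
\[
 \|\beta_k-\beta_{k+1}\|_{AC}\le 3\,\|\mP_k^t\|\,\|\beta_{k+1}\|_{AC}\le 3\cdot 2^{-k}\,\mu(P^{d-1}),
\]
which is summable, so $(\beta_k)$ is Cauchy. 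Since $\alpha\mapsto\alpha'$ identifies $AC$ isometrically with the complete space $L^1([0,1],\R^d)$, the sequence converges to some $\alpha\in AC$.

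It remains to identify the limit. By the continuity estimate \eqref{muCont}, $\|\mu_{\beta_k}-\mu_\alpha\|_w\le\|\beta_k-\alpha\|_{AC}\to 0$, while Proposition~\ref{dapprox} gives $\|\mu-\mu_{\beta_k}\|_w<\|\mP_k^t\|\,\mu(P^{d-1})\to 0$; since $\|\cdot\|_w$ is a norm these two limits coincide, so $\mu_\alpha=\mu$. As $\mu$ was arbitrary in $\mM^+(P^{d-1})$ and every $\mu_\alpha$ lies in $\mM^+(P^{d-1})$ (the functional $I_\alpha$ is positive), this shows $\alpha\mapsto\mu_\alpha$ is surjective onto $\mM^+(P^{d-1})$. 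I expect the main obstacle to be the Cauchy estimate: weak convergence $\mu_{\beta_k}\to\mu$ alone cannot force $AC$-convergence of the $\beta_k$, since the inverse cosine transform is a smoothing, hence unbounded, operator — the Sobolev bound quoted in the introduction reflects exactly this — so the strong convergence has to be extracted from the exact relation $\mG_{\mP_k^t}(\beta_{k+1})=\beta_k$ together with the scale-uniform bounds \eqref{part} for the surgery maps $\mF_{\mP^t}$ and $\mG_{\mP^t}$. The construction of the nested partitions, the completeness of $AC$, and the final limit identification are routine.
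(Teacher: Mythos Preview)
Your proposal is correct and follows essentially the same route as the paper: set $\beta_k=G_{\mP_k^t}(\mu)$ for a nested sequence of ordered tagged partitions, show the sequence is Cauchy in $AC$, pass to the limit $\alpha$, and identify $\mu_\alpha=\mu$ using \eqref{muCont} and Proposition~\ref{dapprox}. The only difference is in the Cauchy step: the paper observes directly that for $k\le l$ and almost every $t$ both $\beta_k'(t)$ and $\beta_l'(t)$ lie in a common cell $\mu(P^{d-1})U^+$ with $U\in\mP_k$, giving $\|\beta_k-\beta_l\|_{AC}\le\|\mP_k^t\|\,\mu(P^{d-1})$ immediately, whereas you reach the same conclusion (with an inessential factor of $3$) by routing through $\mG_{\mP_k^t}(\beta_{k+1})=\beta_k$ and the estimates \eqref{part}.
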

\begin{proof}
Fix a positive measure $\mu$, and let $\mP^t_k$ be a sequence of ordered, tagged partitions of $P^{d-1}$ such that
 \begin{enumerate}[1.]
  \item For each $k$, $\mP^t_{k+1}$ is an ordered refinement of $\mP^t_k$;
  \item $\|\mP^t_k\|\rightarrow 0$ as $k\rightarrow \infty$.
 \end{enumerate}
Let $\alpha_k = \mG_{\mP^t_k}(\mu)$.  For all $k$, $\alpha_k$ has constant speed equal to $\mu(P^{d-1})$.  
If $k\leq l$, then for almost all $t$, there is a common $U\in \mP^t_k$ such that
$\alpha_k'(t)$ and $\alpha_l'(t)$ both lie inside $\mu(P^{d-1})U^+$.  Therefore, for almost all $t$,
\begin{align*}
 |\alpha_k'(t)-\alpha_l'(t)| & \leq \mu(P^{d-1})\diam(U) \\
& \leq \mu(P^{d-1})\|\mP^t_k\|.
\end{align*}
It follows that $\|\alpha_k-\alpha_l\|_{AC}\leq \|\mP^t_k\|\mu(P^{d-1})$.  Hence $\alpha_k$ is a Cauchy sequence and approaches a limit $\alpha$.
If we let $\mu_k = \mu_{\alpha_k}$, then by the continuity inequality \eqref{muCont},  $\mu_k\rightarrow \mu_\alpha$.  If we let $\{U_i\}$ denote the Borel sets 
in the partition $\mP_k$, then
\begin{align*}
 \mu_k=\sum_i \mu(U_i)\delta_{x_i}.
\end{align*}
By Proposition \ref{dapprox}, $\mu_k\rightarrow \mu$.  Hence $\mu_\alpha=\mu$.
\end{proof}

We note that the construction of $\alpha$ from $\mu$ is a right inverse of the map that sends $\alpha\mapsto \mu_\alpha$.

\appendix

\section{Absolutely continuous curves have constant speed reparametrizations}
The construction of the reparametrization is somewhat complicated because the speed of the given curve could be zero
on a set of positive measure.  The following theorem summarizes the important conclusions.\\

\begin{thmA}\label{constspeed}
Given $\alpha\in AC$, there exists a right-continuous, strictly monotonic function 
$\ti{\phi}:[0,1]\rightarrow [0,1]$ with the following properties:
\begin{enumerate}[i.]
\item \label{i} If $F:[0,1]\rightarrow \R^k$ is absolutely continuous and
\begin{align}\label{subord}
 \{t\, |\, F'(t) \neq 0\} \subset \{t\, |\, \alpha'(t)\neq 0\},
 \end{align}
 then $F\circ\ti{\phi}$ is absolutely continuous.  In particular $\alpha\circ\ti{\phi}$ is absolutely continuous.
\item \label{ii} The chain rule is satisfied almost everywhere:
\begin{align*}
 (\alpha\circ\ti{\phi})'(s)=\alpha'(\ti{\phi}(s))\ti{\phi}'(s).
\end{align*}
Moreover, $|(\alpha\circ\ti{\phi})'(s)|=\|\alpha\|_{AC}$ almost everywhere.
\item \label{iii} $\mu_{\alpha}=\mu_{\alpha\circ \ti{\phi}}$.  In particular $\|\alpha\|_{AC}=\|\alpha\circ\ti{\phi}\|_{AC}$.
\end{enumerate}
\end{thmA}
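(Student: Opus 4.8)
The plan is to realize $\ti\phi$ as the generalized right‑inverse of the normalized arc‑length function and to reduce all three assertions to a single change‑of‑variables identity. We may assume $\alpha$ is non‑constant (if $\|\alpha\|_{AC}=0$ then $\mu_\alpha=0$ and $\ti\phi=\mathrm{id}$ works trivially). Put
\[
 s(t)=\frac{1}{\|\alpha\|_{AC}}\int_0^t |\alpha'(\tau)|\,d\tau ,
\]
a continuous nondecreasing surjection of $[0,1]$ onto itself, absolutely continuous with density $|\alpha'|/\|\alpha\|_{AC}$, and define $\ti\phi(y)=\inf\{t\in[0,1]:s(t)>y\}$ for $0\le y<1$ and $\ti\phi(1)=1$.

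First I would record the elementary properties of $\ti\phi$. It is right‑continuous because $\{s>y_n\}$ increases to $\{s>y\}$ as $y_n\downarrow y$, so the infima converge. It satisfies $s\circ\ti\phi=\mathrm{id}_{[0,1]}$ by continuity of $s$ at the defining infimum, and this forces $\ti\phi$ injective, hence (being nondecreasing) strictly monotone. The crucial fact is the pushforward identity $\ti\phi_{\#}(\mathrm{Leb}_{[0,1]})=ds$, the Lebesgue–Stieltjes measure of $s$: both measures give mass $s(c)$ to every interval $[0,c]$, since $\{y:\ti\phi(y)\le c\}$ and $[0,s(c)]$ differ by at most one point. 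Combined with strict monotonicity of $\ti\phi$ this yields, for all $h\in L^1(ds)$ and all $y$,
\[
 \int_{[0,\ti\phi(y)]} h(\tau)\,ds(\tau)=\int_0^{y} h(\ti\phi(u))\,du .
\]

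Now for (i): if $F$ is absolutely continuous with $F'=0$ a.e.\ on $\{\alpha'=0\}$, then, using $ds=(|\alpha'|/\|\alpha\|_{AC})\,d\tau$ and the identity above (and noting $F$ is constant on $[0,\ti\phi(0)]$, where $\alpha'=0$ a.e.),
\[
 F(\ti\phi(y))-F(0)=\int_{[0,\ti\phi(y)]}F'(\tau)\,d\tau=\|\alpha\|_{AC}\int_0^{y}\frac{F'(\ti\phi(u))}{|\alpha'(\ti\phi(u))|}\,du ,
\]
whose right‑hand integrand lies in $L^1[0,1]$ since $\|\alpha\|_{AC}\int_0^1 |F'(\ti\phi(u))|/|\alpha'(\ti\phi(u))|\,du=\int_0^1|F'|\,d\tau$; hence $F\circ\ti\phi\in AC$. (One should first replace $F'$, $|\alpha'|$ by Borel representatives so the compositions with the Borel map $\ti\phi$ are measurable; since $ds\ll\mathrm{Leb}$ this does not change the $du$‑a.e.\ values.) Taking $F=\alpha$ gives $\alpha\circ\ti\phi\in AC$ and $(\alpha\circ\ti\phi)'(y)=\|\alpha\|_{AC}\,\alpha'(\ti\phi(y))/|\alpha'(\ti\phi(y))|$ for a.e.\ $y$, so $|(\alpha\circ\ti\phi)'|=\|\alpha\|_{AC}$ a.e.\ (the $y$ with $\alpha'(\ti\phi(y))=0$ form a null set, of measure $ds(\{\alpha'=0\})=0$). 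For the chain rule in (ii) it remains to see $\ti\phi'(y)=\|\alpha\|_{AC}/|\alpha'(\ti\phi(y))|$ a.e.: off the countably many jumps of $\ti\phi$, off the null set where $\ti\phi$ is not differentiable, and off the null set of $y$ with $\ti\phi(y)\in B:=\{\alpha'=0\}\cup\{t:s'(t)\text{ fails to exist or }\neq|\alpha'(t)|/\|\alpha\|_{AC}\}$ (null because $ds(B)=0$), the relation $s\circ\ti\phi=\mathrm{id}$ and differentiability of $s$ at $t=\ti\phi(y)$ give $s'(t)\,\ti\phi'(y)=1$ by the usual difference‑quotient computation, and multiplying by $\alpha'(t)$ reproduces the formula for $(\alpha\circ\ti\phi)'$. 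Finally (iii) is change of variables on \eqref{functional}: for $f\in C(P^{d-1})$, by homogeneity of $\hat f$ and the displayed identity with $y=1$,
\[
 I_{\alpha\circ\ti\phi}(f)=\|\alpha\|_{AC}\int_0^1\hat f\Big(\tfrac{\alpha'(\ti\phi(u))}{|\alpha'(\ti\phi(u))|}\Big)du=\int_0^1\hat f\Big(\tfrac{\alpha'(\tau)}{|\alpha'(\tau)|}\Big)|\alpha'(\tau)|\,d\tau=I_\alpha(f),
\]
so $\mu_{\alpha\circ\ti\phi}=\mu_\alpha$, and $f\equiv1$ gives $\|\alpha\circ\ti\phi\|_{AC}=\|\alpha\|_{AC}$.

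The main obstacle is the chain rule in (ii). Because $s$ can be flat on a set of positive measure, $\ti\phi$ has genuine jump discontinuities and is \emph{not} absolutely continuous, so the chain rule cannot be quoted from a composition‑of‑AC‑functions theorem; it must be verified pointwise a.e., and the delicate point is assembling the correct null set to discard. This is exactly where the pushforward identity $\ti\phi_{\#}\mathrm{Leb}=ds$ does the work, converting ``Lebesgue‑null in $t$'' statements into ``Lebesgue‑null in $y$'' statements. A secondary, purely technical nuisance — the Lebesgue‑versus‑Borel measurability of the compositions $F'\circ\ti\phi$ and $|\alpha'|\circ\ti\phi$ — is handled by the passage to Borel representatives noted above.
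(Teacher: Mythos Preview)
Your proof is correct and takes a genuinely different route from the paper's. The paper builds $\ti\phi$ in two stages: first it sets $E=\{t:\alpha'(t)\neq 0\}$ and composes with the right inverse $\phi_E$ of the normalized distribution function $\psi_E(t)=\lambda([0,t]\cap E)/\lambda(E)$, whose key feature is $\|\phi_E'\|_\infty\le 1$; this bound is then used in an explicit $\varepsilon$--$\delta$ argument (splitting the finitely many large jumps from the remaining small ones) to prove $\alpha\circ\phi_E\in AC$. Only afterwards does it compose with the inverse of the now strictly increasing arc-length $\ell\circ\phi_E$ to obtain constant speed. You instead invert the arc-length $s$ directly, in one step. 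The paper's closing remark explicitly flags this direct route as problematic, because $\ti\phi'$ need not lie in $L^\infty$ and so the paper's $\varepsilon$--$\delta$ argument would fail. Your workaround is the pushforward identity $\ti\phi_{\#}(\mathrm{Leb})=ds$, which bypasses the $\varepsilon$--$\delta$ definition altogether: it rewrites $F(\ti\phi(y))-F(0)=\int_0^{\ti\phi(y)}F'\,d\tau$ as an indefinite integral $\|\alpha\|_{AC}\int_0^y (F'/|\alpha'|)\circ\ti\phi\,du$ with $L^1$ integrand, yielding absolute continuity immediately. What the paper's two-step construction buys is that every composition is with a tame function (bounded-derivative monotone, or absolutely continuous monotone), so off-the-shelf chain-rule theorems from \cite{Leoni} can be quoted; what your approach buys is economy---a single change-of-variables identity drives all three conclusions---at the price of verifying the chain rule in (ii) by hand via difference quotients and a carefully assembled null set.
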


For this construction, Lebesgue measure is denoted by $\lambda$ or, in an integral, by $dx,\, dt,\, ds$ etc..  Characteristic functions, 
or indicator functions, shall be denoted by $\chi$.  For example, if $E\subset [0,1]$,
\[ \chi_E(t)=\left\{ \begin{array}{ll} 1, & t\in E;\\ 0, & t \notin E. \end{array}\right.\]

\begin{lemma*}
Let $E\subset [0,1]$ be a measurable set.  There exists a strictly monotonic, right-continuous function
$\phi:[0,1]\rightarrow [0,1]$ which, almost everywhere, satisfies
\[\chi_E(\phi(s))\phi'(s)=\lambda(E).\]
\end{lemma*}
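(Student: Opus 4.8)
The plan is to build $\phi$ as (essentially) the inverse of the function
$\Phi(t) = \lambda(E)\,t + \lambda(E^c \cap [0,t])$, where $E^c = [0,1]\setminus E$. The point of this choice is that $\Phi$ is absolutely continuous, nondecreasing, with $\Phi(0)=0$, $\Phi(1)=\lambda(E) + \lambda(E^c) = 1$, and $\Phi'(t) = \lambda(E) + \chi_{E^c}(t)$ a.e.; in particular $\Phi' > 0$ a.e. (it equals $\lambda(E)$ on $E$ and $1$ on $E^c$), so $\Phi$ is strictly increasing and hence a continuous bijection of $[0,1]$ onto itself. Define $\phi = \Phi^{-1}$. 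Since $\Phi$ is a strictly increasing continuous bijection, $\phi$ is too; the lemma asks only for right-continuity and strict monotonicity, which are automatic here (in fact $\phi$ is continuous — the right-continuity phrasing is there to accommodate the more delicate construction of $\ti\phi$ later).

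The heart of the argument is the chain-rule computation. First one checks that $\phi$ is absolutely continuous. This is the standard fact that the inverse of a strictly increasing absolutely continuous function with a.e.-positive derivative is absolutely continuous; concretely, $\Phi'(t) \geq \min(\lambda(E),1) =: c$. If $\lambda(E) = 0$ the claim is trivial to arrange separately (take $\phi(s)=s$; then $\chi_E(\phi(s))\phi'(s) = \chi_E(s)$, which is $0=\lambda(E)$ a.e.\ since $\lambda(E)=0$), so assume $\lambda(E) > 0$, whence $c > 0$ and $\Phi$ is bi-Lipschitz-from-below: $\Phi(t_2) - \Phi(t_1) \geq c(t_2 - t_1)$, giving $|\phi(s_2)-\phi(s_1)| \leq c^{-1}|s_2-s_1|$, so $\phi$ is Lipschitz, hence absolutely continuous. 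Then at almost every $s$ both $\phi'(s)$ exists and $\Phi'(\phi(s))$ exists with value $\lambda(E) + \chi_{E^c}(\phi(s))$; differentiating the identity $\Phi(\phi(s)) = s$ via the chain rule (valid a.e.\ here because $\phi$ is Lipschitz and $\Phi$ is differentiable at $\phi(s)$) yields
\[
\bigl(\lambda(E) + \chi_{E^c}(\phi(s))\bigr)\,\phi'(s) = 1 \qquad \text{a.e.}
\]
Hence $\phi'(s) = \bigl(\lambda(E) + \chi_{E^c}(\phi(s))\bigr)^{-1}$ a.e. On the set where $\phi(s) \in E$ this is $\lambda(E)^{-1}$, so $\chi_E(\phi(s))\phi'(s) = \lambda(E)^{-1}$ there; on the set where $\phi(s)\in E^c$ we have $\chi_E(\phi(s)) = 0$. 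That does not quite match the claimed identity $\chi_E(\phi(s))\phi'(s) = \lambda(E)$ unless one rescales, so in fact the right object is $\Phi(t) = \lambda(E)^{-1}\lambda(E\cap[0,t]) + \lambda(E^c\cap[0,t])$ — i.e.\ speed up $E$ by $1/\lambda(E)$ rather than slow it down — after which $\Phi' = \lambda(E)^{-1}\chi_E + \chi_{E^c}$, $\Phi(1) = 1 + \lambda(E^c)$, and one renormalizes the domain; carrying this bookkeeping through gives $\chi_E(\phi(s))\phi'(s) = \lambda(E)$ a.e. I will fix the normalization at the start so the constants come out as stated.

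The main obstacle is the measure-theoretic subtlety hiding in "differentiate $\Phi(\phi(s)) = s$": the chain rule for a composition of absolutely continuous functions can fail on a set of positive measure when the inner function is not monotone, because $\phi$ may map a positive-measure set to a null set on which $\Phi$ is non-differentiable or has the "wrong" derivative. Here strict monotonicity of $\phi$ saves us: $\phi$ carries null sets to null sets (being Lipschitz, once $\lambda(E)>0$), so the bad set $\{t : \Phi'(t) \text{ does not exist}\}$, which is null, pulls back under $\phi$ to a null set, and off that set the chain rule $(\Phi\circ\phi)' = (\Phi'\circ\phi)\cdot\phi'$ holds pointwise. I would state this as a small auxiliary observation (Lipschitz, strictly increasing $\Rightarrow$ the chain rule holds a.e.\ against any function differentiable a.e.) and then the rest is the routine computation above. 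The degenerate case $\lambda(E) = 0$ is handled separately as noted, and one should also remark that $\phi$ inherits strict monotonicity and continuity from $\Phi$, so all the stated regularity requirements are met.
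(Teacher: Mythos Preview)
There is a genuine gap, and it is not a matter of normalization. Suppose $0 < \lambda(E) < 1$. Any $\phi$ that is a \emph{continuous} strictly increasing bijection of $[0,1]$ onto itself (as your $\Phi^{-1}$ is, for either choice of $\Phi$) must hit all of $E^c$; since your $\phi$ is bi-Lipschitz, the set $\phi^{-1}(E^c)$ has positive measure. On that set $\chi_E(\phi(s))\phi'(s)=0$, while the lemma demands the value $\lambda(E)>0$. So the identity fails on a set of positive measure, regardless of how you rescale $\Phi$. You noticed the mismatch yourself (``on the set where $\phi(s)\in E^c$ we have $\chi_E(\phi(s))=0$'') but diagnosed it as a constant to be fixed; in fact no continuous surjection can work here. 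The phrase ``right-continuous'' in the statement is not just cosmetic for later use: the $\phi$ one needs genuinely has jumps.

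The paper's construction is close to your second attempt but drops the $E^c$ term entirely: set $\psi_E(t)=\lambda(E)^{-1}\lambda(E\cap[0,t])$, which is monotone, Lipschitz, surjective onto $[0,1]$, and \emph{flat} on intervals that miss $E$ in measure. Then $\phi_E(s)=\sup\{t:\psi_E(t)=s\}$ is the (right-continuous) generalized inverse; its jump discontinuities skip exactly those flat intervals, so that $\phi_E(s)\in E$ for a.e.\ $s$. Differentiating the identity $\psi_E\circ\phi_E=\mathrm{id}$ (the chain rule is valid a.e.\ because $\psi_E$ is absolutely continuous and $\phi_E$ is monotone) gives $\chi_E(\phi_E(s))\phi_E'(s)=\lambda(E)$ directly. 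Your instinct to invert a cumulative-type function was right; the fix is to let that function be constant on $E^c$ so that the inverse jumps over it.
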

\begin{proof}
If $\lambda(E)=0$, then $\phi(s)=s$ satisfies the conditions.  Assume $\lambda(E)>0$.\\

We define a function $\psi_E:[0,1]\rightarrow [0,1]$ by
\begin{equation}\label{psi}
\psi_E(t) = \frac{\lambda\left([0,t]\cap E\right)}{\lambda(E)}.
\end{equation}

The function $\psi_E$ is the cumulative distribution function of the measure $\frac{1}{\lambda(E)}\chi_E(t)dt$.  
Therefore, it is differentiable almost everywhere with derivative $\psi'=\frac{\chi_E}{\lambda(E)}$.
Also, it is monotonic and Lipschitz continuous with constant $\frac{1}{\lambda(E)}$.  
At the endpoints, we have $\psi_E(0)=0$ and $\psi_E(1)=1$. In particular $\psi_E$ is surjective.  
Hence, the following function $\phi_E:[0,1]\rightarrow[0,1]$ is well-defined:

\begin{equation}\label{phi}
\phi_E(s)= \sup \{t\, |\, \psi_E(t)=s\}.
\end{equation}

First, we prove that $\phi_E$ is strictly monotonic.  The function $\psi_E$ is continuous and monotonic.  Therefore, for all $s\in [0,1]$, 
$\psi_E^{-1}(s)$ is a closed interval $[a_s,\, b_s]$, with $b_s=\phi_E(s)$ by \eqref{phi}.
If $s_1 < s_2$, then $b_{s_1} < a_{s_2}$ by the monotonicity of $\psi_E$.  $b_{s_1}=\phi_E(s_1)$.  And $a_{s_2}\leq b_{s_2}=\phi_E(s_2)$.
Hence, 
\[\phi_E(s_1) <  \phi_E(s_1).\] 

For all $s$, $\psi_E(\phi_E(s))=s$.  Since $\psi_E$ is absolutely continuous and $\phi_E$ is monotonic, the chain rule
is valid almost everywhere (Corollary 3.50, cite{Leoni}).
\[ 1 = (\psi_E\circ\phi_E)'(s)= \psi_E'(\phi_E(s))\phi_E'(s)=\frac{\chi_E(\phi_E(s))}{\lambda(E)}\phi_E'(s).\]

Finally, we prove right-continuity.
$\phi_E(1)=1$, so for $s<1$, $\phi_E(s)<1$.  
Let $0\leq s < 1$.  We must prove that if $0 < \epsilon < 1 - \phi_E(s),$ then there exists $\delta>0$ such that
\[\phi_E(s+\delta)< \phi_E(s)+\epsilon.\]
\\
By \eqref{phi} and the monotonicity of $\psi_E$,
\[s= \psi_E(\phi_E(s)) < \psi_E(\phi_E(s)+\epsilon).\]
Define $t_1$ by the equation:
\begin{equation}\label{t1}
t_1=\inf \psi_E^{-1}\left(\psi_E(\phi_E(s)+\epsilon)\right)
\end{equation}
By \eqref{phi} and the monotonicity of $\psi_E$, $\phi_E(s)< t_1$.  Therefore, let $t_0$ be any point 
such that
\[ \phi_E(s) < t_0 < t_1.\]
Apply $\psi_E$ to obtain
\[ s=\psi_E(\phi_E(s)) < \psi_E(t_0) < \psi_E(t_1)=\psi_E(\phi_E(s)+\epsilon).\]
The inequalities are strict by \eqref{phi} and \eqref{t1} respectively.\\

Let $\delta=\psi_E(t_0)-s$.  Since $\phi_E$ is strictly increasing,
\[ \phi_E(s) < \phi_E\psi_E(t_0) < t_1 \leq \phi_E(s)+\epsilon.\]
\end{proof}

\begin{proof}[Proof of Theorem \ref{constspeed}]
Given a curve, $\alpha\in AC$, its velocity, $\alpha'$, could have two possible defects.

\begin{enumerate}[1.]
\item  It could be zero on a set of positive measure.
\item Its speed, $|\alpha'(t)|$, could vary with $t$.
\end{enumerate}

We correct these problems one at a time.
Let $E$ be the set on which $\alpha'\neq 0$.  
\[E=\{t\, |\, \alpha'(t)\neq 0\}.\]
Let $\psi=\psi_E$ and $\phi=\phi_E$ be the functions defined in the lemma.\\

Since the curve $\alpha$ is absolutely continuous and the function $\phi$ is monotonic, 
their composition $\alpha\circ\phi$ is differentiable almost everywhere and satisfies the chain rule 
(Corollary 3.50, \cite{Leoni}).  Let 
\[\ti{E}=\{s\, |\, (\alpha\circ\phi)'(s)=0\}.\]
We apply the chain rule and note that
\[ \alpha'(\phi(s))\phi'(s)=0\hspace{.2cm} \text{if and only if}\hspace{.2cm} \chi_E(\phi(s))\phi'(s)=0.\]

According to the lemma, the right hand side is $0$ at most on a set of measure $0$.\\

Next we prove that $\alpha\circ\phi\in AC$.  We start by describing the discontinuities of $\phi$.\\

The function $\phi$ has bounded variation.  Consequently it can have at most countably many discontinuities 
which must all be jump discontinuities.
As a right inverse of $\psi$, the jumps correspond to the intervals on which $\psi(t)$ is flat. 
These are, in other words, the intervals which have zero measure with 
respect to $\chi_E(t)\, dt$.  Denote these intervals $I_k\subset [0,1]$, and let $l_k$ denote their lengths $\lambda(I_k)$.  
We let $s_k=\psi(I_k)$ denote the corresponding point in the domain of $\phi$ at which the jump discontinuity occurs.  
By right-continuity,
\[\phi(s_k)-\phi(s_k^-)=l_k.\]
Also,
\[I_k=[\phi(s_k^-),\, \phi(s_k)].\]
By the fundamental theorem of calculus,
\[ \alpha(\phi(s_k))-\alpha(\phi(s_k^-))=\int_{I_k} \alpha'(t)\, dt.\]
The set $I_k\cap E$ has measure $0$, so the right hand integral equals $0$.  We conclude that $\alpha\circ\phi$ is continuous.\\

The curve $\alpha$ is absolutely continuous.  By defintion, if $\epsilon >0$,
there exists $\delta>0$ such that
\[ \sum_{i=1}^N |\alpha(d_i)-\alpha(c_i)| < \epsilon \]
for any finite collection of pairwise disjoint intervals $(c_i,\, d_i)\subset [0,1]$ satisfying
\[\sum_{i=1}^N (d_i-c_i) < \delta.\]

Now let $(a_i,\, b_i)\subset [0,1]$ be pairwise disjoint intervals such that
\[\sum_{i=1}^N (b_i-a_i) < \frac{1}{2}\delta.\]
We shall prove that
\[\sum_{i=1}^N |\alpha(\phi(b_i))-\alpha(\phi(a_i))|<\epsilon.\]

The sum of the jump discontinuities of $\phi$ cannot be greater than $1$: 
\[\sum l_k < 1.\] It follows that for sufficiently large $M$,
\[\sum_{k\geq M} l_k < \frac{1}{2}\delta.\]

If $1\leq k\leq M-1$ and the discontinuity $s_k$ lies inside the interval $(a_i,b_i)$, we subdivide the 
interval into two new intervals $(a_i,s_k)$ and $(s_k,b_i)$.  Repeating this process at most $M-1$ times, we obtain a new
collection of disjoint intervals $(A_i,B_i)$ in which the jump discontinuities $s_k: 1\leq k\leq M-1$ 
can only occur at the endpoints.  We let $\phi(B_i^-)$ and $\phi(A_i^+)$ respectively denote the left hand
and right hand limits.  By right-continuity, $\phi(A_i^+)=\phi(A_i)$.\\

By the lemma, $\|\phi'\|_{\infty}=\lambda(E)\leq 1$.  It follows that 
\[ \sum_i \phi(B_i^-)-\phi(A_i) \leq \sum_i (b_i-a_i)+\sum_{k\geq M}l_{k}<\delta.\]

Since $\phi$ is strictly monotonic, the intervals $(\phi(A_i),\phi(B_i^-))$ are pairwise disjoint.  
By the absolute continuity of $\alpha$, we conclude that 
\[\sum_{i=1} |\alpha(\phi(B_i^-))-\alpha(\phi(A_i))|<\epsilon.\]
Since $\alpha\circ\phi$ is continuous, $\alpha(\phi(B_i^-))=\alpha(\phi(B_i)).$  By the triangle inequality, it follows that
\[\sum_{i=1} |\alpha(\phi(b_i))-\alpha(\phi(a_i))|<\epsilon.\]

The arguments above remain valid if we replace $\alpha$ with any absolutely continuous function $F$ that satisfies \eqref{subord}.  
Hence, for any such $F$, $F\circ\ti{\phi}$
is absolutely continuous.  This applies, in particular, to the function 
\[ \ell(t) = \frac{1}{\|\alpha\|_{AC}}\int_0^{t} |\alpha'(\tau)|\, d\tau.\]

We have adjusted the parametrization of the curve $\alpha$ so that its speed is $0$ on, at most, a set of measure $0$.  
To finish the proof, we must show that we can further reparametrize to normalize the speed.\\

The function $\ell\circ\phi$ is absolutely continuous, monotonic, and $\{s\, |\, (\ell\circ\phi)'(s)=0\}$ has measure $0$.  
Therefore the inverse function
\[ (\ell\circ\phi)^{-1}:[0,1]\rightarrow [0,1]\]
is also absolutely continuous and monotonic (\cite{Leoni}).  Define
\[\ti{\phi}=\phi\circ(\ell\circ\phi)^{-1}.\]
Both $\alpha\circ\phi$ and $(\ell\circ\phi)^{-1}$ are absolutely continuous and the latter is monotonic.  Therefore, their composition
\[(\alpha\circ\phi)\circ(\ell\circ\phi)^{-1}=\alpha\circ\ti{\phi}\]
is absolutely continuous.  Similarly, $F\circ\ti{\phi}$ , and $\ell\circ\ti{\phi}$ are absolutely continuous.
Thus we have proved the first conclusion, \ref{i}.\\

If $s=(\ell\circ\phi)^{-1}(q),$ the chain rule implies
\[ \frac{d}{dq}(\ell\circ\phi)^{-1}(q) = \frac{\|\alpha\|_{AC}}{|(\alpha\circ\phi)'(s)|}.\]
Therefore, 
\[ |(\alpha\circ\ti{\phi})'(q)| = |(\alpha\circ\phi)'(s)\frac{d}{dq}(\ell\circ\phi)^{-1}(q)| = \|\alpha\|_{AC}.\]
This proves the second statement, \ref{ii}.
 
To prove the final statement, let $f$ be a continuos function on $P^{d-1}$.  Define $F(t)$ by
\begin{align*}
 F(t)=\int_0^t \hat{f}(\alpha'(q))\, dq.
\end{align*}
Then the inclusion \eqref{subord} is satisfied which implies that $F(\ti{\phi}(s))$ is absolutely continuous.  
By Theorem 3.54 (\cite{Leoni}), this justifies the following change of variables:
\begin{align}
 \int_0^{\ti{\phi}(s)} \hat{f}(\alpha'(q))\, dq & = \int_0^s \hat{f}(\alpha'(\ti{\phi}(s))\ti{\phi}'(s)\, ds\\
 & =\int_0^s \hat{f}\left((\alpha\circ\ti{\phi})'(s)\right)\, ds.
\end{align}
Setting $s=1$, we conclude $\int f\, d\mu_\alpha  =\int f\, d\mu_{\alpha\circ \ti{\phi}}$ for all $f\in C(P^{d-1})$.  
Letting $f=1$, we find that $\|\alpha\|_{AC}=\|\alpha\circ\ti{\phi}\|_{AC}$.
\end{proof}

\begin{remark}
 The reader might wonder why the construction of $\ti{\phi}$ proceeded in two steps.  Indeed, one could 
 directly define $\ti{\phi}$ as the right inverse of $\ell$:
 \[\ti{\phi}=\sup\{t\, |\, \ell(t)=s\}.\]
Since $\ell$ and $\psi$ have many of the same properties, $\ti{\phi}$ shares many key properties with $\phi$.  
The problem with this direct approach is in
the proof of the absolute continuity of $\alpha\circ\phi$.  We used the fact that $\|\phi'\|_{\infty}\leq 1$.  In contrast
$\ti{\phi}'$ need not belong to $L^\infty$.
\end{remark}

\bibliography{geotomo}

\providecommand{\bysame}{\leavevmode\hbox to3em{\hrulefill}\thinspace}
\providecommand{\MR}{\relax\ifhmode\unskip\space\fi MR }
\providecommand{\MRhref}[2]{%
  \href{http://www.ams.org/mathscinet-getitem?mr=#1}{#2}
}
\providecommand{\href}[2]{#2}
\begin{thebibliography}{1}

\bibitem{Bolk}
Ethan~D. Bolker, \emph{A class of convex bodies}, Transactions of the American
  Mathematical Society \textbf{145} (1969), 323--345.

\bibitem{Leoni}
G.~Leoni, \emph{A first course in sobolev spaces}, American Mathematical
  Society, 2009.

\bibitem{Hoff}
Hoffman L.M., \emph{Estimating an even spherical measure from its sine
  transform}, Appl. Math. \textbf{54} (2009), 67--78.

\bibitem{Louis}
A.K. Louis, M.~Riplinger, M.~Spiess, and Spodarev E., \emph{Inversion
  algorithms for the spherical radon and cosine transform}, Inverse Problems
  \textbf{27}, no.~3.

\bibitem{Kid}
Kiderlen M. and Pfrang A., \emph{Algorithms to estimate the rose of directions
  of a spatial fiber system}, J. Microsc. \textbf{219} (2005), 50--60.

\bibitem{Gard}
Gardner R., Kiderlen M., and Milanfar P., \emph{Convergence of algorithms for
  reconstructing convex bodies and directional measures}, Ann. Stat.
  \textbf{34} (2006), 1331--74.

\bibitem{Rubin02}
B.~Rubin, \emph{Inversion formulas for the spherical radon transform and the
  generalized cosine transform}, Advances in Applied Mathematics \textbf{29}
  (2002), 471--497.

\bibitem{Sch}
R.~Schneider, \emph{Zur einem problem von shephard uber die projectionen
  konvexer korper}, Math. Z. \textbf{101} (1967), 71--82.

\end{thebibliography}
\bibliographystyle{amsplain}

\end{document}